\documentclass[12pt, oneside]{amsart}

\usepackage{amsmath,amssymb,cite,mathrsfs,tikz-cd}
\usepackage[all]{xy}
\usepackage{typearea} 
\usepackage[backref=page]{hyperref} 
\renewcommand*{\backrefalt}[4]{{\tiny%
    (\ifcase #1 Not cited.%
          \or Cited on p.~#2.%
          \else Cited on pp. #2.%
    \fi%
    )}}
    
\usepackage{color} 

\usepackage{epstopdf} 
\usepackage{booktabs}

\newtheorem{teo}{Theorem}[section]
\newtheorem{thm}[teo]{Theorem}
\newtheorem{prop}[teo]{Proposition}
\newtheorem{lemma}[teo]{Lemma}
\newtheorem{cor}[teo]{Corollary}
\newtheorem{conj}[teo]{Conjecture}

\newtheorem{def-prop}[teo]{Definition-Proposition}

\theoremstyle{definition}

\newtheorem{defn}[teo]{Definition}
\newtheorem{expl}[teo]{Example}
\newtheorem{rmk}[teo]{Remark}

\theoremstyle{plain}

\newcommand{\exampleandremarkendmark}{%
  \begingroup
    \renewcommand{\qedsymbol}{\ensuremath{\diamond}}%
    \qed
  \endgroup
}
\AddToHook{env/eg/end}{\exampleandremarkendmark}
\AddToHook{env/expl/end}{\exampleandremarkendmark}
\AddToHook{env/rmk/end}{\exampleandremarkendmark}
\AddToHook{env/remark/end}{\exampleandremarkendmark}

\newtheoremstyle{named}{}{}{\itshape}{}{\bfseries}{.}{.5em}{\thmnote{#3 }#1}
\theoremstyle{named}

\makeatletter
\newcommand{\neutralize}[1]{\expandafter\let\csname c@#1\endcsname\count@}
\makeatother

\numberwithin{equation}{section}

  \newcommand{\Q}{\mathbb{Q}}

  \newcommand{\Z}{\mathbb{Z}}

\newcommand{\CC}{\mathbb{C}}

\newcommand{\QQ}{\mathbb{Q}}
\newcommand{\RR}{\mathbb{R}}

\newcommand{\ZZ}{\mathbb{Z}}

\newcommand{\calA}{\mathcal{A}}

\newcommand{\calF}{\mathcal{F}}

\newcommand{\calO}{\mathcal{O}}

\newcommand{\id}{\mathrm{id}}

  \renewcommand{\bar}{\overline}
  
  \renewcommand{\hat}{\widehat}

  \providecommand{\frac}[1]{\operatorname{Frac}(#1)}

\DeclareMathOperator{\Pic}{Pic}

\DeclareFontFamily{U}{mathx}{}
\DeclareFontShape{U}{mathx}{m}{n}{<-> mathx10}{}
\DeclareSymbolFont{mathx}{U}{mathx}{m}{n}
\DeclareMathAccent{\widehat}{0}{mathx}{"70}
\DeclareMathAccent{\widecheck}{0}{mathx}{"71}

  \renewcommand{\ker}{\operatorname{ker}}
  
  \renewcommand{\lim}{\operatorname{lim}}
  \renewcommand{\deg}{\operatorname{deg}}

\newcommand{\cA}{\mathcal{A}}

\newcommand{\cF}{\mathcal{F}}

\newcommand{\cV}{\mathcal{V}}
\newcommand{\cW}{\mathcal{W}}
\newcommand{\cX}{\mathcal{X}}
\newcommand{\cY}{\mathcal{Y}}
\newcommand{\cZ}{\mathcal{Z}}

\title{The Beilinson--Bloch heights of generalized Ceresa cycles}
\author{Kaiyuan Gu}
\address{BICMR, Peking University, Haidian District, Beijing 100871, China}
\email{gky@stu.pku.edu.cn}
\date{September 2026}

\begin{document}

\begin{abstract}
    We compute the Beilinson--Bloch height of the generalized Ceresa cycle $\mathrm{Ce}(V)=\frac 1 2(V-[-1]_*V)$ for any cycle $V$ of pure dimension $d$ on an abelian variety $A$, in terms of an arithmetic intersection number of a certain adelic line bundle. As an application, we prove that this height is given by an adelic line bundle over algebraic families.
\end{abstract}

\maketitle
\section{Introduction}
Let $A$ be an abelian variety of dimension $g$ defined over a number field $K$\footnote{The results have analogous formulations over function fields. We restrict ourselves to the number field case.}, and let $V=\sum_i a_i[V_i]$ be any cycle of pure dimension $d$ on $A$, with $a_i\in\Q$ and $V_i$ being integral subvarieties of dimension $d$. Let $L$ be an ample, symmetric line bundle on $A$, with a rigidification along the identity section. Let $m:A\times A\to A$ be the addition map, and $p_1,p_2:A\times A\to A$ be the projections to the factors. Set $Q=Q_L:=m^*L-p_1^* L-p_2^*L\in \Pic(A\times A)$. Denote the Fourier--Mukai transform by 
\begin{align*}\calF_L:\mathrm{Ch^*}(A)_\QQ&\to \mathrm{Ch}^*(A)_\QQ,\\
Z&\mapsto \calF_L(Z)=p_{2,*}(p_1^*(Z)\cdot e^{c_1(Q)}).\end{align*}

We define the (normalized) \textit{generalized Ceresa cycle} by $$\mathrm{Ce}(V)=\frac 1 2(V-[-1]_*V).$$This cycle is homologically trivial, since $[-1]_*$ acts as the identity on $H_{2d}(A(\CC),\QQ)$. We define its Beilinson--Bloch height (of Fourier type) by $$h_{\mathrm{BB},\cF}(\mathrm{Ce}(V))=\frac{(-1)^d}{[K:\Q]}\langle \mathrm{Ce}(V),\cF_L\mathrm{Ce}(V)\rangle_{\mathrm{BB}}.$$ 
For a brief account of the Beilinson--Bloch height pairing, see Section~\ref{sec:heights}. Here the pairing takes the component of $\cF_L\mathrm{Ce}(V)$ of complementary codimension $d+1$, which is cohomologically trivial, and whose admissible extension can be explicitly expressed; see Section~\ref{sec:admissible-fourier}.

The factor $(-1)^d$ is motivated by the arithmetic standard conjecture of Hodge type. We raise the following similar conjecture for the positivity of Beilinson--Bloch heights of Fourier type.
\begin{conj}\label{Conj:FourierPos}
	For any cycle $V$ of pure dimension $d$, we have $$h_{\mathrm{BB},\cF}(\mathrm{Ce}(V))\geq 0.$$
\end{conj}

This conjecture would follow from the arithmetic standard conjecture of Hodge type; see Corollary~\ref{cor:fourier-height-sign}.

For a smooth, projective, geometrically connected curve $C$ of genus $g>1$, let $C=V\subset A=\operatorname{Jac}(C)$ denote any Abel--Jacobi image, and let $L$ represent the principal polarization given by the theta divisor. Zhang's formulae \cite[Theorems~1.5.6 and~2.3.5]{Zhang_GScycle} relate the corresponding heights to Gross--Schoen cycles. With the current conventions, Zhang's result reads as follows:
\begin{thm}[Zhang, curve case]Endow $L$ with its canonical adelic metric. Let $\bar Q=m^*\bar L-p_1^*\bar L-p_2^*\bar L\in \hat\Pic(A\times A)_{\mathrm{int}}$. Then we have
    $$h_{\mathrm{BB},\cF}(\mathrm{Ce}(C))=-\frac 1 {6[K:\Q]}\bar Q^{3}\cdot[C\times C].$$
\end{thm}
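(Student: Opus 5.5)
This is the curve case of the paper's general formula, so I would derive it from Zhang's results in \cite{Zhang_GScycle}, translated into the present normalizations; no new arithmetic input is needed. The plan is to (i) identify $\mathrm{Ce}(C)$ and the codimension-$2$ part of $\cF_L\mathrm{Ce}(C)$ with classical cycles on $\operatorname{Jac}(C)$, (ii) match the height $h_{\mathrm{BB},\cF}$ with Zhang's height of the Gross--Schoen or Ceresa cycle, and (iii) identify Zhang's expression in adelic intersection numbers with $\bar Q^3\cdot[C\times C]$.

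\emph{Step 1: Fourier transform.} Here $d=1$ and $g=\dim A$. With $L=\calO(\Theta)$ symmetric, $c_1(Q)$ is the Poincar\'e class via the principal polarization. By Beauville's decomposition, $C=\sum_s C_{(s)}$ with $[-1]_*C_{(s)}=(-1)^sC_{(s)}$, so $\mathrm{Ce}(C)=\sum_{s\ \mathrm{odd}}C_{(s)}$, which modulo algebraic equivalence is essentially $C_{(1)}$. The transform $\cF_L$ sends the weight-$s$ part of $\mathrm{Ch}_1$ to codimension $1+s$ up to sign; equivalently, codimension-$2$ pieces arise from the $s=1$ part. Concretely,
\[
\cF_L(C)_{\text{codim }2}=\tfrac12\,p_{2,*}\bigl(p_1^*C\cdot c_1(Q)^2\bigr)=\tfrac12\,\iota_*\bigl(c_1(Q|_{C\times A})^2\bigr),
\]
and its odd part is the component paired in $h_{\mathrm{BB},\cF}$. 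Beauville's formula $\cF(C_{(s)})\in \mathrm{Ch}^{s+1}_{(s)}$ then identifies this component, up to an explicit sign and factor, with $\Theta^{g-2}$-type classes times $C_{(1)}$, i.e.\ with a multiple of the Ceresa cycle in codimension $g-1$ mapped to codimension $2$. I would instead stay on $A\times A$: by the projection formula,
\[
\langle \mathrm{Ce}(C),\cF_L\mathrm{Ce}(C)\rangle_{\mathrm{BB}}
\]
is a Beilinson--Bloch pairing on $C\times C$ (or $C^3$) of cycles built from the diagonal, which is Zhang's setting.

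\emph{Step 2: admissible extensions.} For the arithmetic side, I would use the explicit admissible extension of $\cF_L\mathrm{Ce}(V)$ from Section~\ref{sec:admissible-fourier}: $\bar Q$ with canonical metric is an integrable adelic bundle. Since Néron--Tate metrics make $\bar L$ canonical, $[-1]^*\bar Q=\bar Q$ and $\bar Q$ is translation-compatible. The height then becomes $\tfrac{(-1)^d}{2}\widehat{\mathrm{ch}}_2$-type pairings $\bar Q^{2}\cdot$(admissible extension of $\mathrm{Ce}$) on $C\times A$. Expanding $\mathrm{Ce}(C)=\frac12(C-[-1]_*C)$ and using $\bar Q$'s behavior under $(1,-1)$, namely $(1\times[-1])^*\bar Q=-\bar Q$, I expect the cross terms to combine so that only $\bar Q^3\cdot[C\times C]$ survives with coefficient $-\frac16$. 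Here the extra $\bar Q$ factor comes from the $e^{c_1(Q)}$ term of degree $3$ integrated over $C\times C$. The cubic term carries coefficient $\frac1{3!}$, and the symmetrization against $[-1]$ kills the even-degree terms.

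\emph{Main obstacle.} The delicate point is justifying that the arithmetic intersection with the canonical $\bar Q$ actually computes the Beilinson--Bloch pairing, i.e.\ that the restriction of $\bar Q$-powers gives admissible (flat/harmonic) Green currents at every place. This is exactly where Zhang's theorems \cite[Theorems~1.5.6 and~2.3.5]{Zhang_GScycle} enter: they express the Gross--Schoen height through adelic intersections on $C^3$, which I would pull back along $C\times C\to A\times A$. The remaining work is bookkeeping of signs and the factor $\frac16$ against Zhang's conventions.
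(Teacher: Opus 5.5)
Your top-level plan (treat the statement as the $d=1$ case, use the explicit admissible Fourier extension, and expand $\mathrm{Ce}(C)=\frac12(C-[-1]_*C)$) is the paper's, but the proposal neither establishes it nor sets it up correctly. Step~1 is wrong. $\mathrm{Ce}(C)$ has codimension $g-1$, so the paired component of $\cF_L\mathrm{Ce}(C)$ has codimension $2$. Since $p_{2,*}(p_1^*C\cdot c_1(Q)^k)$ has codimension $k-1$, this component is
\[
T_{\mathrm{Ce}(C)}=\frac{1}{3!}\,p_{2,*}\bigl(p_1^*\mathrm{Ce}(C)\cdot c_1(Q)^3\bigr).
\]
Your $\frac12p_{2,*}(p_1^*C\cdot c_1(Q)^2)$ is a divisor class, namely $\cF_L(C_{(0)})\in\mathrm{Ch}^1_0(A)$. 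Its contribution from $\mathrm{Ce}(C)$ is zero, because $([-1]\times\id)^*c_1(Q)=-c_1(Q)$ and the exponent is even. The ``$\bar Q^2\cdot(\ldots)$ on $C\times A$'' in Step~2 inherits this error. The cubic term with coefficient $\frac1{3!}$ is not an ``extra factor''; it is the only term that ever enters.

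The genuine gap is admissibility, which you correctly single out but do not supply. Restricting $\bar Q$ to $C\times C$, or pulling back Zhang's formulas on $C^3$, only concerns intersection numbers. By itself it does not show that $\hat T_{\mathrm{Ce}(\mathcal C)}=\frac1{3!}p_{2,*}\bigl(\hat c_1(\bar Q)^3\cdot[\mathrm{Ce}(\mathcal C)\times\cA]\bigr)$ has zero curvature and is numerically trivial on every component of every special fibre of a regular model $\cA$. That property is what makes $\hat\deg_\cA(\mathrm{Ce}(\mathcal C)\cdot\hat T_{\mathrm{Ce}(\mathcal C)})$ equal to the Beilinson--Bloch pairing. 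If instead you quote Zhang's Theorems~1.5.6 and~2.3.5 wholesale, as the paper does when introducing the statement, then the entire content is the comparison of normalizations: the $\frac12$ in $\mathrm{Ce}$, the sign $(-1)^d$, the sign of the Fourier kernel, and the division by $[K:\Q]$. That is exactly what you leave as ``I expect'' and ``bookkeeping''.

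The paper proves the $d=1$ case of its main theorem directly, in two steps.
\begin{itemize}
\item At archimedean places the curvature vanishes: each term of $c_1(\bar Q_\CC)$ contains a differential from the first factor, while $\dim_\RR C(\CC)=2<3$.
\item At a finite place, take a component $E$ of the special fibre of a strictly semistable alteration of a model of $C\times A$. The bundles $n\bar L_1+n^{-1}\bar L_2\pm\bar Q=\frac1n\bigl((x,a)\mapsto[n]x\pm a\bigr)^*\bar L$ are nef. Expanding $\bar Q$ as half their difference, and using a Fulger--Lehmann norm on $N_{g-2}(E)_\RR$ together with $L_1^2=0$, gives $\|c_1(\bar Q)^3\|\le C/n$ for all $n$. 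Hence the class is $0$.
\end{itemize}
Only then does your anticipated computation go through. The projection formula gives $\frac1{3!}\bar Q^3\cdot[\mathrm{Ce}(C)\times\mathrm{Ce}(C)]$. Since $([-1]\times\id)^*\bar Q=-\bar Q=(\id\times[-1])^*\bar Q$ and $3$ is odd, the four terms each equal $\frac14\bar Q^3\cdot[C\times C]$; they add rather than cancel. This yields the coefficient $(-1)^1/3!=-\frac16$.
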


The goal of this note is to extend this to an arbitrary cycle $V$ of pure dimension $d$. Indeed, we prove the following formula for general $A,V,L$:

\begin{thm}[Main Theorem]\label{thm:main}
Let $V$ be any cycle of pure dimension $d$ on $A$. Endow $L$ with its canonical adelic metric. Let $\bar Q=m^*\bar L-p_1^*\bar L-p_2^*\bar L\in \hat\Pic(A\times A)_{\mathrm{int}}$. Then we have
    $$h_{\mathrm{BB},\cF}(\mathrm{Ce}(V))=\frac{(-1)^d}{(2d+1)![K:\Q]}\bar Q^{2d+1}\cdot[V\times V].$$
Here, we write $[V\times V]=\sum_{i,j}a_i a_j[V_i\times V_j]$ for the product cycle of pure dimension $2d$ in $A\times A$.
\end{thm}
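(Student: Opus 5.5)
The plan is to compute both sides through explicit admissible (arithmetic) extensions, using the canonical adelic metrics, which make everything compatible with $[n]$ and $[-1]$. On the cycle side, $\bar Q$ is the canonical metrized Poincaré-type bundle on $A\times A$. It is bi-additive, so $(1\times[-1])^*\bar Q=-\bar Q$ and $([-1]\times 1)^*\bar Q=-\bar Q$.

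First I write the height pairing as an arithmetic intersection on $A\times A$. By definition,
\[
\langle \mathrm{Ce}(V),\cF_L\mathrm{Ce}(V)\rangle_{\mathrm{BB}}=\hat{\cF}_L(\mathrm{Ce}(V))_{d+1}\cdot \widehat{\mathrm{Ce}(V)},
\]
where the hats denote admissible extensions. Following Section~\ref{sec:admissible-fourier}, the natural arithmetic Fourier transform $p_{2,*}(p_1^*\widehat{Z}\cdot e^{\hat c_1(\bar Q)})$ should produce an admissible extension of the codimension $d+1$ component. Here $\widehat Z$ is any extension of $Z=\mathrm{Ce}(V)$ (for instance the adelic closure with Green currents), and admissibility comes from $\bar Q$ being the canonical metric. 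Its codimension $d+1$ part is $p_{2,*}\bigl(p_1^*\widehat{Z}\cdot \hat c_1(\bar Q)^{2d+1}\bigr)/(2d+1)!$, since $p_1^*Z$ has dimension $g+d$ and we need to push down to dimension $g-d-1$. The projection formula then gives
\[
\langle Z,\cF_L Z\rangle_{\mathrm{BB}}=\frac{1}{(2d+1)!}\,\bar Q^{2d+1}\cdot[Z\times Z].
\]
The key point is that once one factor is admissible, the choice of Green current on the other side no longer matters. So the pairing reduces to the honest adelic intersection number of $\bar Q^{2d+1}$ with the geometric cycle $Z\times Z$, which is well defined for integrable adelic line bundles.

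Next I expand $Z\times Z$ using $Z=\tfrac12(V-[-1]_*V)$:
\[
[Z\times Z]=\tfrac14\bigl(V\times V - V\times V^- - V^-\times V + V^-\times V^-\bigr),\qquad V^-=[-1]_*V.
\]
By the projection formula under $[-1]\times 1$, $1\times[-1]$ and $[-1]\times[-1]$, the pullbacks of $\bar Q$ are $-\bar Q$, $-\bar Q$ and $\bar Q$ respectively. The power $2d+1$ is odd, so
\[
\bar Q^{2d+1}\cdot V^-\times V^-=\bar Q^{2d+1}\cdot V\times V,\qquad \bar Q^{2d+1}\cdot V\times V^-=\bar Q^{2d+1}\cdot V^-\times V=-\bar Q^{2d+1}\cdot V\times V.
\]
The total is therefore $\tfrac14\cdot 4\,\bar Q^{2d+1}\cdot[V\times V]$. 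Multiplying by $(-1)^d/[K:\Q]$ gives the stated formula. This also matches Zhang's curve case: with $d=1$ we get $-\tfrac16$, consistent up to the normalization in the curve-case theorem.

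The main obstacle is the first step: justifying that $\hat\cF_L$ applied to an arbitrary arithmetic extension of a homologically trivial cycle yields an admissible extension in codimension $d+1$, with no correction term from the lower-degree parts. One also has to check that the pairing with a non-admissible $\widehat Z$ equals the BB pairing. I would handle this as follows:
\begin{enumerate}
\item Check harmonicity of $\hat c_1(\bar Q)^{2d+1}$ pushed forward against the curvature form of the canonical metric, by averaging under $[n]$.
\item Use $[n]^*\bar Q$-equivariance to see that the non-archimedean components are the limits defining admissibility.
\item Observe that $\mathrm{Ce}(V)$ is anti-invariant under $[-1]$, so the even-weight Beauville components, which could obstruct, cancel.
\end{enumerate}
I expect these to follow from the description in Section~\ref{sec:admissible-fourier}.
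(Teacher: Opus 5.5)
Your framework is the paper's. You extend the codimension $d+1$ part of $\cF_L\mathrm{Ce}(V)$ by $\frac{1}{(2d+1)!}p_{2,*}\bigl(\hat c_1(\bar Q)^{2d+1}\cdot[\cZ\times\calA]\bigr)$ and reduce the pairing by the projection formula to $\frac{1}{(2d+1)!}\bar Q^{2d+1}\cdot[\cV\times\cV]$. You then combine the four terms of $\mathrm{Ce}(\cV)\times\mathrm{Ce}(\cV)$ using $([-1]\times\id)^*\bar Q=-\bar Q=(\id\times[-1])^*\bar Q$ and the oddness of $2d+1$. That final bookkeeping is correct and matches the paper.

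\textbf{The gap.} The missing step is the one you flag yourself: you never show that $\hat T_\cZ$ is numerically trivial on the components of the special fibres. This is the real content of the theorem, and items (1)--(3) do not supply it. Item (2) states a hope rather than an argument. Item (3) aims at the wrong obstruction. No cancellation of Beauville components is involved, because the codimension $d+1$ part of $\cF_L(Z)$ is automatically $\cF_L(Z_1)$. The paper in fact proves admissibility of $\hat T_\cZ$ for \emph{every} flat $\cZ$, with no use of $[-1]$-anti-invariance. At the archimedean places no averaging under $[n]$ is needed either: each term of $c_1(\bar Q_\CC)$ contains exactly one differential from the first factor, so $c_1(\bar Q_\CC)^{2d+1}$ vanishes on $Z(\CC)\times A(\CC)$ by type.

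\textbf{What is needed at a finite place $v$.} The missing idea is a uniform quantitative estimate.
\begin{enumerate}
\item Pass by a de Jong alteration to a regular model $\cY$ of $Z\times A$ with strictly semistable fibre at $v$. Fix a component $E$ of $\cY_v$, and take a Fulger--Lehmann norm on $N_{g-d-1}(E)_\RR$ that equals $\alpha\mapsto\deg(h^{g-d-1}\alpha)$ on pseudoeffective classes.
\item Put $\bar L_1=p_1^*\bar L|_Z$, $\bar L_2=p_2^*\bar L$ and $\bar M_n=n\bar L_1+n^{-1}\bar L_2$. The bundles $\bar M_n\pm\bar Q=\frac1n\bigl((x,a)\mapsto[n]x\pm a\bigr)^*\bar L$ are nef; this is where the canonical metric is really used.
\item Write $\bar Q$ as half their difference and expand $c_1(\bar Q)^{2d+1}\cdot[E]$ on nef model approximations. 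Every term is pseudoeffective, so the triangle inequality bounds the norm by $\deg(M_n^{2d+1}H^{g-d-1}\cdot E)$.
\item Bound this by $\deg_Y(M_n^{2d+1}H^{g-d-1})$ on the generic fibre. That quantity is $O(1/n)$ because $L_1^{d+1}=0$ on $Z\times A$. Letting $n\to\infty$ gives numerical triviality.
\end{enumerate}

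\textbf{Consequence for your setup.} This argument uses that $\cZ$ is flat with $d$-dimensional generic fibre. So your $\widehat Z$ cannot be ``any extension'': vertical components of an extension are not controlled by this estimate. You should feed in the horizontal closure itself, as the cycle $[\cZ\times\calA]$ via Zhang's lemma, rather than $p_1^*\widehat Z$ with Green currents.
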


\begin{expl}
    If $V=\{x\}$ with $x\in A(K)$, by Theorem~\ref{thm:main}, the Beilinson--Bloch height of its Ceresa cycle $\mathrm{Ce}(x)$ is twice of the absolute N\'eron--Tate canonical height of the point: $$h_{\mathrm{BB},\cF}(\mathrm{Ce}(x))=\frac 1 {[K:\Q]}\hat\deg(\bar Q|_{(x,x)})=2\hat h_L(x).$$
\end{expl}

\begin{rmk}
The following Beilinson--Bloch height of Lefschetz type is more commonly seen in the literature. For $1\leq q\leq g$ and a cycle $z\in\mathrm{Ch}^q_1(A)$ of Beauville degree $1$, we define the Beilinson--Bloch height of Lefschetz type by
\[
h_{\mathrm{BB},\mathrm{Lef}}(z)
=\frac{(-1)^q}{[K:\Q]}\langle z,\mathsf L^{g+1-2q}z\rangle_{\mathrm{BB}},
\]
where $\mathsf L$ is the Lefschetz operator $z\mapsto c_1(L)\cdot z$. If $g+1-2q<0$, the power denotes the inverse of the corresponding Lefschetz isomorphism on Beauville degree $1$, as explained in Section~\ref{sec:lefschetz}.

For $2q\leq g+1$ and a primitive cycle $z\in P^q_1(A)$, that is, $\mathsf L^{g+2-2q}z=0$, the comparison in Proposition~\ref{prop:fourier-lefschetz} gives
\[
h_{\mathrm{BB},\cF}(z)
=\frac{h^0(L)}{(g+1-2q)!}\,
h_{\mathrm{BB},\mathrm{Lef}}(z).
\]
However, in this article, the Beilinson--Bloch heights are those of Fourier type, unless otherwise specified.
\end{rmk}

As an application of the theory of Deligne pairing in \cite{YuanZhang_Quasiproj}, we obtain the following:

\begin{cor}
   Let $S$ be an irreducible, normal quasi-projective variety over $\bar \Q$. Let $\cV \subset \cA\to S$ be a flat family of $d$-dimensional closed subvarieties of abelian varieties, with a relatively ample, symmetric line bundle $L$ on $\cA$, rigidified along the identity section. Write $\bar L\in \hat\Pic(\cA/\Z)_{\mathrm{int}}$ for its canonical adelic metrization, and set $\bar Q_L=m^*\bar L-p_1^*\bar L-p_2^*\bar L\in \hat\Pic((\cA\times_S \cA)/\ZZ)_{\mathrm{int}}$. Then the fiberwise Beilinson--Bloch height (with respect to $L$) $$s\in S(\bar \QQ)\longmapsto h_{\mathrm{BB},\cF}(\mathrm{Ce}(\cV_s))$$ is the height function of an integrable adelic $\Q$-line bundle $$\frac{(-1)^d}{(2d+1)!}\langle \bar Q_L^{2d+1}\rangle_{(\cV\times_S \cV)/S}\in \hat\Pic(S/\ZZ)_{\QQ,{\mathrm{int}}}.$$

More generally, for a relative cycle $\cV=\sum_i a_i[\cV_i]$ with $\cV_i\subset\cA$ being flat families of $d$-dimensional subvarieties, bilinearity gives an integrable adelic $\Q$-line bundle
\[
\frac{(-1)^d}{(2d+1)!}\sum_{i,j}a_i a_j
\langle\bar Q_L^{2d+1}\rangle_{(\cV_i\times_S\cV_j)/S},
\]
inducing the height function $s\mapsto h_{\mathrm{BB},\cF}(\mathrm{Ce}(\cV_s))$.
\end{cor}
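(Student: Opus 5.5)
The plan is to reduce the Corollary pointwise to Theorem~\ref{thm:main} and then use the Deligne pairing of \cite{YuanZhang_Quasiproj} to package the right-hand side of the Main Theorem as a height function on $S$.

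\textbf{Step 1: adelic line bundles on the family.} Since $\cA\to S$ is an abelian scheme with $L$ relatively ample, symmetric and rigidified, the Tate limit $\bar L=\lim_n \frac{1}{n^2}[n]^*\bar L_0$ exists in $\hat\Pic(\cA/\ZZ)_{\mathrm{int}}$, by the construction in \cite{YuanZhang_Quasiproj}. Its restriction to each fiber $\cA_s$, for $s\in S(\bar\QQ)$, is the canonical adelic metric on $L_s$. Pullback along $m,p_1,p_2:\cA\times_S\cA\to\cA$ preserves integrability, so $\bar Q_L$ is integrable and $\bar Q_L|_{\cA_s\times\cA_s}=\bar Q_{L_s}$.

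\textbf{Step 2: the Deligne pairing.} The morphism $\cV_i\times_S\cV_j\to S$ is flat and projective of relative dimension $2d$. The theory of Deligne pairings for integrable adelic line bundles on quasi-projective varieties therefore gives
\[
\langle \bar Q_L^{2d+1}\rangle_{(\cV_i\times_S\cV_j)/S}\in\hat\Pic(S/\ZZ)_{\QQ,\mathrm{int}}.
\]
Its key property is functoriality under base change to a point $s\in S(\bar\QQ)$, defined over some number field $K$. Namely, the height of the pairing at $s$ equals $\frac{1}{[K:\QQ]}\hat\deg$ of the pairing on $\cV_{i,s}\times\cV_{j,s}$, which is $\frac{1}{[K:\QQ]}\bar Q_{L_s}^{2d+1}\cdot[\cV_{i,s}\times\cV_{j,s}]$.

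\textbf{Step 3: pointwise comparison.} Summing over $i,j$ with coefficients $a_ia_j$ and multiplying by $\frac{(-1)^d}{(2d+1)!}$, the height of the displayed bundle at $s$ equals the right-hand side of Theorem~\ref{thm:main} for $(\cA_s,\cV_s,L_s)$. By the Main Theorem, this is $h_{\mathrm{BB},\cF}(\mathrm{Ce}(\cV_s))$. The quantity is independent of the choice of $K$ because both sides are normalized by $[K:\QQ]$. The first statement of the Corollary is the special case of a single integral $\cV$.

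\textbf{Main obstacle.} The substantive points are the base-change compatibility of the Deligne pairing for flat, possibly non-smooth families $\cV_i\times_S\cV_j$, and the identification of fibers of the Tate limit with the canonical metrics. I would invoke the corresponding statements in \cite{YuanZhang_Quasiproj} directly. Where $\cV_i$ is not integral on fibers, I would use that flatness makes the fiber of the cycle equal to the cycle-theoretic fiber $[\cV_{i,s}]$, and that the pairing is additive in components.
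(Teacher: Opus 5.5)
Your proposal is correct and follows the same route as the paper. You apply Theorem~\ref{thm:main} fiber by fiber, and you use the base-change compatibility of the Yuan--Zhang Deligne pairing (recalled in Section~\ref{sec:heights}) to identify the height of $\frac{(-1)^d}{(2d+1)!}\sum_{i,j}a_ia_j\langle\bar Q_L^{2d+1}\rangle_{(\cV_i\times_S\cV_j)/S}$ at $s$ with the right-hand side of the Main Theorem for $(\cA_s,\cV_s,L_s)$; the points you flag as obstacles are exactly those the paper also delegates to \cite{YuanZhang_Quasiproj}.
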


\begin{expl}\label{expl:translates}
	We give an example on the variation of Beilinson--Bloch heights on the family of cycles given by translations of a fixed cycle $V$.
	
	Let $S=A$, $\cA=A\times S$, and consider the family $\cV_s:=s+V:=(t_s)_*V$ for a fixed cycle $V\in\mathrm{Ch}^{g-d}(A)$, where $t_s(x)=x+s$. Write
\[
h_{\mathrm {BB},V}(s):=h_{\mathrm{BB},\cF}(\mathrm{Ce}(s+V)),\qquad
 a_V:=\frac{(-1)^d}{(2d)!}\deg\bigl(c_1(Q)^{2d}\cdot[V\times V]\bigr).
\]
For $s\in A(F)$, where $F/K$ is finite, let $\bar P_s=\bar Q|_{A_F\times\{s\}}$. Theorem of the cube gives
\[
(t_s\times t_s)^*\bar Q=\bar Q+(p_1^*\bar P_s+p_2^*\bar P_s)+2\pi^*(s^*\bar L),
\]
where $\pi:A_F\times A_F\longrightarrow\operatorname{Spec}F$ is the structural morphism. Consequently, Theorem~\ref{thm:main} yields
\begin{align*}
h_{\mathrm {BB},V}(s)=&h_{\mathrm {BB},V}(0)+2a_V\hat h_L(s)\\
&+\frac{(-1)^d}{(2d)![F:\Q]}
  \bar Q^{2d}\cdot(p_1^*\bar P_s+p_2^*\bar P_s)\cdot[V\times V]\\
&+\frac{(-1)^d}{2(2d-1)![F:\Q]}
  \bar Q^{2d-1}\cdot(p_1^*\bar P_s+p_2^*\bar P_s)^2\cdot[V\times V],
\end{align*}
since every term involving $(p_1^*\bar P_s+p_2^*\bar P_s)^{> 2}$ vanishes. Here the arithmetic intersections are taken over $F$, and the last term is omitted when $d=0$. In the formula, the second term and the last term are quadratic in $s$, while the third term is linear in $s$. Therefore, we see $h_{\mathrm {BB},V}(s)$ is (at most) quadratic on the $\Q$-vector space $S(\bar \Q)_\Q=A(\bar \Q)_\Q$.
\end{expl}

As another application, the admissible Fourier transform, constructed in Section~\ref{sec:admissible-fourier}, gives an explicit formula for admissible extensions of cycles of positive Beauville degree. Recall the Beauville decomposition:
\[
\mathrm{Ch}^q_s(A)
=\{z\in\mathrm{Ch}^q(A):[n]^*z=n^{2q-s}z\text{ for all }n\geq 2\},
\qquad
\mathrm{Ch}^q_{>0}(A)=\bigoplus_{s>0}\mathrm{Ch}^q_s(A).
\]
Here $s$ is called the \textit{Beauville degree}. We prove that every cycle in $\mathrm{Ch}^q_{>0}(A)$ admits an admissible extension. This recovers the results of K\"unnemann \cite{Kunnemann_Height2001} in the special case of positive Beauville degree.
\begin{cor}\label{cor:positive-admissible}
    Let $A$ be a semistable abelian variety of dimension $g$ over a number field $K$.
    Fix a projective regular model $\calA/\calO_K$, which is known to exist by \cite{Kunnemann_ProjRegModel_BBht}.
    Then every class in $\mathrm{Ch}^q_{>0}(A)$, for $1\leq q\leq g$, admits an admissible extension to $\cA$.
\end{cor}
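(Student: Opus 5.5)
The statement to prove is Corollary~\ref{cor:positive-admissible}. The plan is to write down the admissible extension explicitly by an \emph{admissible Fourier transform} built from the canonical adelic metric on $L$, as promised for Section~\ref{sec:admissible-fourier}.

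Reductions. First I pass to a finite extension of $K$ over which $A$ has semistable reduction and $L$ is symmetric and rigidified. This is harmless for admissibility up to pullback/pushforward and $\QQ$-coefficients. Since Beauville degrees are preserved by the Fourier transform up to shift, $\mathcal F_L$ maps $\mathrm{Ch}^p_s(A)$ isomorphically onto $\mathrm{Ch}^{g-p+s}_s(A)$, and $\mathcal F_L\circ\mathcal F_L=(-1)^g[-1]^*$. Hence every $z\in\mathrm{Ch}^q_s(A)$ with $s>0$ can be written as $z=\mathcal F_L(w)$ with $w=(-1)^g[-1]^*\mathcal F_L(z)$. It therefore suffices to show that every class of the form $\mathcal F_L(w)_{(q)}$ with positive Beauville degree has an admissible extension.

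Construction. On the model $\cA\times_{\calO_K}\cA$, or a regular projective model dominating it, I take $\bar Q$, the canonical adelic metrization of $Q$. Over the semistable model this is represented, after a finite base change and blowup, by a model line bundle $\mathcal Q$ whose curvature at each finite place is the canonical (Tate) one. For a cycle $\mathcal W$ on $\cA$ extending $w$, I define
\[
\hat{\mathcal F}(\mathcal W)=p_{2,*}\bigl(p_1^*\mathcal W\cdot e^{c_1(\mathcal Q)}\bigr)
\]
and take its codimension-$q$ component. The generic fibre is $\mathcal F_L(w)_{(q)}=z$.

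Admissibility check. I must show that $\hat{\mathcal F}(\mathcal W)_{(q)}$ meets every vertical cycle of complementary dimension trivially. The key property is that $\mathcal Q$ restricted to vertical fibres behaves like a biextension: it is "bilinear" under $[n]\times 1$. So $([n]\times1)^*\mathcal Q\sim n\mathcal Q$ up to vertical correction divisors, which become zero after using canonical metrics, that is, after passing to the limit in $n$. Pairing against a vertical cycle $Y$ in a fibre over a closed point gives a function of $n$. This function is controlled polynomially via $[n]^*z=n^{2q-s}z$, while the vertical intersection theory, which is governed by the reduction (a torus extension / tropical part), scales with a different weight. Comparing the two scalings forces the pairing to vanish precisely when $s>0$. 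Concretely, I plan to use the Künneth/Beauville decomposition of the vertical component: the only obstruction lives in Beauville degree $0$.

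The main obstacle is making this scaling argument rigorous on a regular model. The map $[n]$ does not extend to an endomorphism of $\cA$, so I would instead work with the adelic (limit) intersection theory of Yuan--Zhang and verify that the adelic intersection $\bar Q$-expression against vertical classes is homogeneous of the wrong weight. Following Künnemann, I would then descend to an honest cycle on $\cA$ by adjusting with a vertical cycle. That adjustment is possible because the obstruction map to vertical classes factors through something killed by $s>0$.
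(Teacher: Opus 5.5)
\paragraph{Review of the proposal.} There is a genuine gap: the proposal's first step agrees with the paper, but its model construction rests on a false premise, and the finite-place admissibility, which is the heart of the corollary, is never argued.

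\paragraph{The reduction step and the construction.} Your reduction is the same as in Proposition~\ref{prop:positive-surjectivity}: Fourier inversion $\mathrm{Ch}^{g-q+s}_s(A)\xrightarrow{\sim}\mathrm{Ch}^{q}_s(A)$, followed by the horizontal closure of a representative of $w$, of dimension $d=q-s$. The only difference is a harmless scalar, since in general $\cF_L\circ\cF_L=(-1)^g h^0(L)^2[-1]^*$. The construction is the problem. The canonical metric on $Q$ is not a model metric, even after finite base change and blow-up, as soon as $A$ has a toric part. For a Tate curve $E$, the measure $c_1(\bar Q)^2$ on $E\times E$ is a nonzero multiple of Haar measure on the skeleton, whereas model metrics always produce atomic measures. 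So there is no model bundle $\mathcal Q$. The class $\hat c_1(\bar Q)^{2d+s}\cdot[\cW\times\cA]$ has to be defined on the fixed model as a limit (Zhang's Lemma~2.1.2). The model approximants need not be admissible, so admissibility must come from an estimate that is uniform along the approximation. You also omit the archimedean curvature condition. It is easy: every term of $c_1(\bar Q_\CC)$ contains one differential from the first factor, and $2d+s>2d=\dim_\RR W(\CC)$.

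\paragraph{The missing finite-place argument.} Numerical triviality on components of special fibres is never argued. A weight comparison would need $[n]$, or some Beauville-type decomposition, acting on numerical classes of those components. As you note, $[n]$ does not extend to $\cA$, so ``the obstruction lives in Beauville degree $0$'' has no meaning there. The final vertical adjustment ``following K\"unnemann'' is asserted, not proved; if it just means citing K\"unnemann's existence theorem, the Fourier construction becomes irrelevant.

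The missing idea is to use positivity instead of functoriality on the model.
\begin{itemize}
\item Let $\bar L_1,\bar L_2$ be the pullbacks of $\bar L$ from $W$ and $A$, and put $\bar M_n=n\bar L_1+n^{-1}\bar L_2$.
\item By the cube theorem, $\bar M_n\pm\bar Q=\tfrac1n\bigl((x,a)\mapsto[n]x\pm a\bigr)^*\bar L$ is nef. Here $[n]$ is used only through an adelic pullback on the generic fibre.
\item Approximate $\bar M_n\pm\bar Q$ by nef model bundles $B^\pm$ on a de Jong alteration with strictly semistable reduction.
\item Expand $Q^{2d+s}=2^{-(2d+s)}(B^+-B^-)^{2d+s}$ into pseudoeffective terms on a component $E$, and use the Fulger--Lehmann norm on $N_{g-d-s}(E)_\RR$.
\item This gives $\|c_1(\bar Q)^{2d+s}|_E\|\le\deg(M_n^{2d+s}H^{g-d-s})\le C_s n^{-s}$, a generic-fibre degree bounded because $L_1^{d+1}=0$.
\item Letting $n\to\infty$ yields Theorem~\ref{thm:finite-admissibility}.
\end{itemize}
Thus $s>0$ enters through $\dim W=d$, exactly as at the archimedean places, and not through Beauville weights of vertical classes.
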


\begin{rmk}
	Indeed, Beauville conjectured in \cite{Beauville1986} that for any $q$ and any $s<0$, $\mathrm{Ch}^q_s(A)=0$, and that for any $q$, the restriction of the cycle class map $\mathrm{cl}: \mathrm{Ch}^q_0(A)\to  H^{2q}(A,\QQ)$ is injective. Under these conjectures, Corollary~\ref{cor:positive-admissible} applies to every homologically trivial Chow class.
\end{rmk}
\subsection{Conventions}
Our conventions for arithmetic(al) schemes, homological and cohomological arithmetic Chow groups are the same as those in \cite{Zhang_GScycle}. Algebraic Chow groups have rational coefficients; for arithmetic Chow groups we follow Zhang's convention allowing real coefficients on vertical cycles.

\subsection{Acknowledgements}
I thank Robin de Jong for his comments on a previous version of this work, which substantially improve the writings of this paper, and motivate the discussion in Section~\ref{sec:lefschetz}. I thank my advisor Xinyi Yuan for helpful discussions. KG is partially supported by Beijing Natural Science Foundation 24QY0003.

\subsection{Disclosure of AI use}
The author came up with the idea on defining the Beilinson--Bloch height of Fourier type for general cycles in abelian varieties in 2024. When he read Zhang's paper, he realized that \cite[Lemma 2.1.2]{Zhang_GScycle} applies when one wants to define the admissible extension of the Fourier--Mukai transform. This part is purely human-generated.

To make his observation precise, he asked AI to search for a reference of the norm on the group numerical equivalence classes of cycles, so that he can control the error term in the limiting process. AI found \cite[Corollary 3.16]{FulgerLehmann_PosConeCycle}, and the author then applied it in the proof of Theorem~\ref{thm:finite-admissibility}. 

After the draft is finished, the author also used AI to fix typos, check grammar and audit the proofs.

\section{Notations}\label{sec:heights}
\subsection{The Beilinson--Bloch pairing}
Let $X/K$ be a smooth, projective variety of dimension $n$, and write $\mathrm{Ch}^q(X)_{\mathrm{hom}}$ for the rational equivalence classes of homologically trivial cycles of codimension $q$. We briefly recall the arithmetic construction of the Beilinson--Bloch pairing \cite{Beilinson_Height,Bloch_Height}; see also \cite[\textsection~2.1, p.~17]{Zhang_GScycle}.

On a projective regular model $\cX/\calO_K$, an \textit{admissible extension} of $z\in\mathrm{Ch}^q(X)_{\mathrm{hom}}$ is an arithmetic class $\hat z=(\widetilde z,g_z)$ extending $z$, such that 
\begin{itemize}
	\item the curvature $\omega(\hat z)=0$ at every archimedean place;
	\item the cycle $\widetilde z$ is numerically trivial after restriction to every irreducible component of every special fiber.
\end{itemize}

For $z\in\mathrm{Ch}^q(X)_{\mathrm{hom}}$ and $w\in\mathrm{Ch}^{n+1-q}(X)_{\mathrm{hom}}$ admitting admissible extensions, put
\[
\langle z,w\rangle_{\mathrm{BB}}
=\hat\deg_{\cX}(\hat z\cdot\hat w),
\]
where $\hat w$ is any arithmetic extension of $w$. Admissibility makes this independent of the chosen extensions; the pairing is also independent of the model. The existence of such extensions is still conjectural in general. In our case, K\"unnemann proved the existence of such extensions for abelian varieties with everywhere semistable reduction in \cite[Theorem~1.6]{Kunnemann_Height2001}. 

Under a finite extension $F/K$, the projection formula gives \cite[\textsection~1.4 and Lemma~1.5]{Kunnemann_Height2001}
\[
\langle z_F,w_F\rangle_{\mathrm{BB},F}
=[F:K]\langle z,w\rangle_{\mathrm{BB},K}.
\]
Therefore, if the admissible extensions are constructed over $F$, the Beilinson--Bloch height pairing over $K$ is recovered by dividing by $[F:K]$. 

\subsection{Adelic line bundles and absolute heights}
We use the theory of \cite{YuanZhang_Quasiproj} for adelic line bundles on quasi-projective varieties. For a quasi-projective variety $X/\bar \Q$, the group of adelic line bundles on $X$ is denoted by $\hat \Pic(X/\Z)$. If $X$ is already projective, we use the more classical notation $\hat \Pic(X)$. 

Choose a number field of definition $K$. For $\bar M\in\hat\Pic(X/\Z)$, $x\in X(\bar K)$, and a finite extension $F/K$ with $x\in X(F)$, the (absolute) height is
\begin{equation*}
h_{\bar M}(x)=\frac{\hat\deg(x^*\bar M)}{[F:\Q]}.
\end{equation*}
This quantity is independent of $F$. If $W\subset X$ is an integral projective subvariety of dimension $r$ defined over $K$, and $\bar M_0,\ldots,\bar M_r$ are integrable adelic line bundles on $X$, we use $\bar M_0\cdots\bar M_r\cdot[W]$ to denote the unnormalized arithmetic intersection number over $K$. For integrable $\bar M$ with $M|_W$ ample, the (normalized) height of $W$ is defined by
\[
h_{\bar M}(W)=\frac{\bar M^{r+1}\cdot[W]}{(r+1)[K:\Q]\deg_M(W)},
\qquad \deg_M(W)=c_1(M)^r\cdot[W].
\]
For a flat projective family $f:\cX\to S$ of relative dimension $r$ over a normal quasi-projective base scheme $S/\bar \Q$, and integrable adelic line bundles $\bar M_0,\ldots,\bar M_r\in \hat \Pic(\cX/\Z)_{\mathrm{int}}$, compatibility of the Deligne pairing with base change gives
\[
h_{\langle\bar M_0,\ldots,\bar M_r\rangle_{\cX/S}}(s)
=\frac{\bar M_0|_{\cX_s}\cdots\bar M_r|_{\cX_s}\cdot[\cX_s]}{[F:\Q]},
\]
whenever the family is defined over $F$ and the point $s\in S(F)$.

\section{The admissible Fourier transform}\label{sec:admissible-fourier}
Let $$T_{\mathrm{Ce}(V)}=\mathrm{pr}_{d+1}(\cF_L(\mathrm{Ce}(V)))=\frac 1{(2d+1)!}p_{2,*}\bigl(p_1^*(\mathrm{Ce}(V))\cdot c_1(Q)^{2d+1}\bigr).$$
Then, by the Beauville decomposition of $\mathrm{Ch}^*(A)$, we find that $$h_{\mathrm{BB},\cF}(\mathrm{Ce}(V))=\frac{(-1)^d}{[K:\Q]}\langle\mathrm{Ce}(V),T_{\mathrm{Ce}(V)}\rangle_{\mathrm{BB}}.$$
The goal of this section is to find an explicit admissible extension for $T_{\mathrm{Ce}(V)}$. Recall the following result of Zhang \cite[Lemma 2.1.2]{Zhang_GScycle}:
\begin{lemma}
    Let \(\mathcal{X}\) be an arithmetic scheme and let \(\bar{L}_1, \dots, \bar{L}_n\) be adelically metrized line bundles on the generic fiber \(X\). Assume that the metrics on \(L_i\) at the infinite places are smooth. Then the functional
\[
\widehat{\mathrm{Ch}}^{\dim \mathcal{X} - n}(\mathcal{X}) \longrightarrow \mathbb{R} : \quad \alpha \mapsto \alpha \cdot c_1(\bar{L}_1) \cdots c_1(\bar{L}_n)
\]is represented by an element in \(\widehat{\mathrm{Ch}}_{\dim \mathcal{X} - n}(\mathcal{X})\) denoted by
\[
c_1(\bar{L}_1) \cdots c_1(\bar{L}_n) \cdot [\mathcal{X}] \in \widehat{\mathrm{Ch}}_{\dim \mathcal{X} - n}(\mathcal{X}).
\]
Moreover, this element has the following restriction to the generic fiber:
\[
c_1(L_1) \cdots c_1(L_n)[X].
\]
\end{lemma}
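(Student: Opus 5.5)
The idea is to build the representing class by approximation: first for model metrics, then by a limiting argument for general adelic metrics that are smooth at the archimedean places. A functional on $\widehat{\mathrm{Ch}}^{\dim\cX-n}(\cX)$ is \emph{represented} by an element $\beta\in\widehat{\mathrm{Ch}}_{\dim\cX-n}(\cX)$ when it equals $\alpha\mapsto\hat\deg(\alpha\cdot\beta)$, via the cap-product pairing between cohomological and homological arithmetic Chow groups on the arithmetic scheme $\cX$.

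First suppose each $\bar L_i$ is induced by a Hermitian line bundle $\hat{\mathcal L}_i$ on some model. Since $\cX$ may be singular, I would take a model $\cX'\to\cX$ dominating the models of all $\mathcal L_i$, which is an isomorphism on generic fibers. Then:
\begin{itemize}
\item set $\beta=\pi_*\bigl(\hat c_1(\hat{\mathcal L}_1)\cdots\hat c_1(\hat{\mathcal L}_n)\cap[\cX']\bigr)$;
\item the projection formula $\alpha\cdot\pi_*(\gamma)=\pi^*\alpha\cdot\gamma$, for cohomological $\alpha$ and homological $\gamma$, gives the representation property;
\item the generic-fiber restriction of $\beta$ is $c_1(L_1)\cdots c_1(L_n)[X]$, because $\pi$ is an isomorphism there.
\end{itemize}

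For general adelic metrics, the archimedean metrics are fixed and smooth, so only the finite-place metrics need to be approximated. Write each metric as a uniform limit of model metrics, $\bar L_i=\lim_k\bar L_{i,k}$, with $\bar L_{i,k}-\bar L_i=\mathcal O(f_{i,k})$ and $\|f_{i,k}\|_{\sup}\to0$, supported at finitely many places for each $k$. Let $\beta_k$ be the class built in the first step from the $\bar L_{i,k}$. By multilinearity, $\beta_k-\beta_l$ telescopes into a sum of terms, each containing at least one vertical factor $\mathcal O(f_{i,k}-f_{i,l})$ supported on special fibres.

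Since vertical cycles are allowed real coefficients, $\beta_k-\beta_l$ is a vertical class whose pairing against any fixed $\alpha$ is bounded by
\[
C(\alpha)\sum_i\|f_{i,k}-f_{i,l}\|_{\sup}.
\]
For fixed $\alpha$ the sequence $\alpha\cdot\beta_k$ is therefore Cauchy, and the functional $\alpha\mapsto\lim_k\alpha\cdot\beta_k$ is well defined. To represent it by an actual class, I need the $\beta_k$ themselves to converge in the finite-dimensional real vector space of vertical homological classes at the finitely many bad places. Two points require care here:
\begin{itemize}
\item the model $\cX$ is fixed, so the $\beta_k$ are pushed down to $\cX$, and vertical components of $\cX$ at each place are finite in number;
\item at the infinitely many good places the model metrics can be chosen equal to the $\cX$-model, so no correction occurs there.
\end{itemize}
Then $\beta=\lim\beta_k$ exists, and it has generic fiber $c_1(L_1)\cdots c_1(L_n)[X]$, since all the corrections are vertical.

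The main obstacle is this convergence step: bounding the vertical correction classes uniformly on the fixed model $\cX$. One needs to show that pushing forward $\mathcal O(f)\cdot\prod\hat c_1(\cdot)$ from ever finer models to $\cX$ yields vertical cycles with coefficients bounded by $\|f\|_{\sup}$ times intersection numbers with nef-ish bounded classes. I would first try to do this via the local (Chambert-Loir) measures at each finite place, expressing each component coefficient as an integral of $f$ against a positive measure of bounded mass. This requires writing each $\bar L_i$ as a difference of semipositive metrics, and the real coefficients absorb the limit.
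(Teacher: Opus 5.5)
\paragraph{The gap: in what space the corrections converge.} Your strategy is the same as the one behind Zhang's proof: approximate the $\bar L_i$ by model metrics that agree with one fixed model outside a finite set $S$ of finite places, push the model products down to the fixed $\cX$, and note that the differences $\beta_k-\beta_l$ are vertical, with zero Green current and trivial generic fibre. The paper does not reprove the lemma; it cites Zhang. In the proof of Theorem~\ref{thm:finite-admissibility} it reuses his argument in the form ``$\beta_{n,j}$ converges to $c_1(\bar Q)^{2d+s}$ in $N_{g-d-s}(E)_\RR$'', that is, as convergence of \emph{numerical} classes on components $E$ of special fibres.

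That is exactly where your proposal breaks. The corrections $\beta_k-\beta_l$ have dimension $\dim\cX-n$ and lie in the special fibres $\cX_v$, $v\in S$, which have dimension $\dim\cX-1$. For $n\geq 2$ they are arbitrary cycles of codimension $n-1$ in $\cX_v$, whose supports may move with $k$. They are not combinations of the finitely many components of $\cX_v$. So there are no ``component coefficients''. The vertical part of $\widehat{\mathrm{Ch}}_{\dim\cX-n}(\cX)$ is essentially a quotient of $\bigoplus_{v\in S}\mathrm{Ch}_{\dim\cX-n}(\cX_v)_\RR$. That space is not known to be finite-dimensional, and it carries no topology in which ``$\beta=\lim_k\beta_k$'' makes sense. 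As written, your convergence step only works for $n=1$, whereas the paper applies the lemma with $n=2d+s$.

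\paragraph{The fix: pass to numerical equivalence.} The pairing of $\alpha\in\widehat{\mathrm{Ch}}^{\dim\cX-n}(\cX)$ with a vertical cycle $Y$ supported in $\cX_v$ is $\log N(v)$ times an intersection degree on the special fibre. It therefore depends only on the numerical class of $Y$. The relevant numerical groups are finite-dimensional real vector spaces: those of $\cX_v$, or of its components after a regular alteration, as in the paper, which works in $N_{g-d-s}(E)_\RR$ normed via Fulger--Lehmann.

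Work on the finite-dimensional quotient where the pairing is nondegenerate. There, your pointwise Cauchy estimate for $\alpha\cdot\beta_k$ already forces the numerical classes of $\beta_k-\beta_1$ to converge, since convergence against a spanning family of functionals is convergence. Choose any real vertical cycle $Y_\infty$ in the limit class and set $\beta=\beta_1+Y_\infty$. This $\beta$ represents the functional and has generic fibre $c_1(L_1)\cdots c_1(L_n)[X]$. It is unique only up to classes that pair trivially, which is all the lemma asserts.

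In particular, the uniform Chambert-Loir-measure bound you single out as the main obstacle is not needed. The real issue is identifying the correct finite-dimensional space in which the corrections converge.
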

Enlarge $K$ so that $A$ has semistable reduction over $K$. Pick a projective regular model $\calA$ of $A$ defined over $\calO_K$; such a model exists by K\"unnemann \cite{Kunnemann_ProjRegModel_BBht}. 

We generalize the construction $T_{(\cdot)}$ to horizontal cycles and all positive Beauville degrees, as follows.

Fix integers $0\leq d<g$ and $1\leq s\leq g-d$.
Let $\cZ\subset\cA$ be an integral $(d+1)$-dimensional subscheme, flat over $\calO_K$.
Write $Z$ for its generic fiber and $i:Z\hookrightarrow A$ for the inclusion. Set
\[
T_Z^{(s)}:=\mathrm{pr}_{d+s}(\cF_L(Z))
=\frac{1}{(2d+s)!}p_{2,*}\bigl(p_1^*Z\cdot c_1(Q)^{2d+s}\bigr).
\]
For $s=1$, we retain the notation $T_Z=T_Z^{(1)}$ used above.

Zhang's lemma applies with $\cX=\cZ \times \calA$, $n=2d+s$, and all the line bundles equal to the restrictions of $\bar Q$. This gives an element $$\hat c_1(\bar Q)^{2d+s}\cdot[\cZ\times \calA]\in \hat {\mathrm{Ch}}_{g+1-d-s}(\cZ\times \calA).$$ Then we may consider its push-forward under the map $p_2:\cZ\times \calA\to \calA$; see \cite[§2.1]{Zhang_GScycle}. The construction extends by linearity to the following map.

\begin{defn}
    Define the admissible Fourier transform by \begin{align*}
\hat T^{(s)}_{(\cdot)}:Z^{g-d}_{\mathrm{hor}}(\calA)&\longrightarrow\hat {\mathrm{Ch}}_{g+1-d-s}(\calA)\\
\cZ&\longmapsto\hat T^{(s)}_{\cZ}:=\frac 1 {(2d+s)!}p_{2,*}(\hat c_1(\bar Q)^{2d+s}\cdot[\cZ\times \calA]).
\end{align*}
Here $Z^{g-d}_{\mathrm{hor}}(\calA)$ denotes the group of horizontal cycles
with rational coefficients. For $s=1$, write $\hat T_\cZ=\hat T^{(1)}_\cZ$.
\end{defn}

\begin{rmk}
	K\"unnemann considered a similar construction for abelian varieties with everywhere good reduction, or for more general abelian schemes; see \cite[§6]{Kunnemann_ArithFourier}.
\end{rmk}

We will prove that the arithmetic Chow cycle $\hat T^{(s)}_\cZ$ is an admissible extension of $T_Z^{(s)}$. To illustrate the idea, we first consider an infinite place.
\begin{prop}
    The following assertions hold.
    \begin{itemize}
        \item[(1)] The arithmetic Chow cycle $\hat T^{(s)}_\cZ$ restricts to $T_Z^{(s)}$ on the generic fiber.
        \item[(2)] For every infinite place, the curvature of $\hat T^{(s)}_\cZ$ is $\omega (\hat T^{(s)}_\cZ)=0$. 
    \end{itemize}
\end{prop}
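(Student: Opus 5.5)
Part (1) is a direct consequence of Zhang's lemma and the compatibility of push-forward with restriction to the generic fiber. By that lemma, the class $\hat c_1(\bar Q)^{2d+s}\cdot[\cZ\times\calA]$ restricts on the generic fiber $Z\times A$ to $c_1(Q)^{2d+s}\cdot[Z\times A]$. Restricting to the generic fiber commutes with proper push-forward along $p_2:\cZ\times\calA\to\calA$; here $\cZ\times\calA$ means any regular or desingularized model over $\calO_K$, as in Zhang's setup. Hence $\hat T^{(s)}_\cZ$ restricts to
\[
\frac{1}{(2d+s)!}p_{2,*}\bigl(c_1(Q)^{2d+s}\cdot[Z\times A]\bigr)=\frac{1}{(2d+s)!}p_{2,*}\bigl(p_1^*Z\cdot c_1(Q)^{2d+s}\bigr)=T^{(s)}_Z.
\]
The middle equality uses $[Z\times A]=p_1^*Z$, or more precisely the projection formula for $i\times\id$. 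I would also check that $T^{(s)}_Z$ is the correct graded piece of $\cF_L(Z)$. This is immediate because the pushforward of $c_1(Q)^k\cdot p_1^*Z$ has codimension $k-d$ on $A$, and $k=2d+s$ gives codimension $d+s$.

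For (2), I would compute the curvature at an infinite place $v$ via the archimedean component of Zhang's construction. The curvature of $\hat c_1(\bar Q)^{2d+s}\cdot[\cZ\times\calA]$ is the current $c_1(\bar Q)_v^{2d+s}\wedge\delta_{Z_v\times A_v}$. The curvature of a push-forward is the push-forward current, so
\[
\omega(\hat T^{(s)}_\cZ)=\frac{1}{(2d+s)!}p_{2,*}\bigl(\delta_{Z_v\times A_v}\wedge c_1(\bar Q)_v^{2d+s}\bigr).
\]
For the canonical metric, $c_1(\bar Q)_v$ is the translation-invariant real $(1,1)$-form $\omega_Q$ on $A_v\times A_v$. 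In flat coordinates $(x,y)$ it is $\omega_Q=\sum_{j,k} h_{jk}(dx_j\wedge d\bar y_k+dy_j\wedge d\bar x_k)$ up to constants, namely the mixed part of $m^*\omega_L-p_1^*\omega_L-p_2^*\omega_L$ with $\omega_L$ translation invariant. Every monomial in $\omega_Q^{k}$ therefore has equal total degree in $dx,d\bar x$ and in $dy,d\bar y$: it has bidegree $(k,k)$ total, splitting as $x$-degree $k$ and $y$-degree $k$.

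Fiber integration over $Z_v$ (of real dimension $2d$) keeps only the terms with $x$-degree exactly $2d$. These terms have $y$-degree $2d$ as well, whereas the total degree of $\omega_Q^{2d+s}$ is $2(2d+s)$. Since $s\geq1$ gives $2(2d+s)\neq 4d$, the $x$-degree-$2d$ component is zero. Hence the push-forward current vanishes identically, which gives $\omega(\hat T^{(s)}_\cZ)=0$.

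The main point needing care is justifying the formula for the archimedean current in Zhang's lemma and its compatibility with $p_{2,*}$. I would rely on the smoothness of the archimedean metrics, which is assumed in Zhang's lemma. The canonical metrics on $L$, and hence on $Q$, are smooth at infinite places, so this assumption is satisfied.
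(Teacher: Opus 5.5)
Your proposal is correct and matches the paper's proof. Part (1) comes from the generic-fiber statement in Zhang's lemma, together with compatibility of restriction with $p_{2,*}$. Part (2) rests on the fact that each monomial of the translation-invariant form $c_1(\bar Q)$ has exactly one differential from the first factor, so $c_1(\bar Q)^{2d+s}$ has first-factor degree $2d+s>2d=\dim_{\RR}Z(\CC)$ and vanishes on $Z(\CC)\times A(\CC)$; your total-degree count $4d\neq 2(2d+s)$ is a roundabout form of this same observation.
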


\begin{proof}
    Assertion (1) follows from the statement about the generic fiber in Zhang's lemma.
    
    For (2), take flat coordinates $z_1,\ldots,z_g$ on the first factor
    $A(\CC)$ and $w_1,\ldots,w_g$ on the second factor. Then
    \[
    c_1(\bar Q_\CC)=\sum_{j,k}h_{jk}
    (dz_j\wedge d\bar w_k+dw_j\wedge d\bar z_k).
    \]
    Each term contains one differential from the first factor. Since $Z(\CC)$
    has real dimension $2d$ and $s>0$, the restriction of
    $c_1(\bar Q_\CC)^{2d+s}$ to $Z(\CC)\times A(\CC)$ vanishes.
    Its push-forward is the curvature of $\hat T^{(s)}_\cZ$, which is therefore zero.
\end{proof}

We now prove admissibility at every finite place; the proof is more technical than in the case of an infinite place.

\begin{thm}\label{thm:finite-admissibility}
    For every component of a special fiber of $\calA$, the restriction of $\hat T^{(s)}_\cZ$ is numerically trivial.
\end{thm}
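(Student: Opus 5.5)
Fix a closed point $v$ of $\operatorname{Spec}\calO_K$ and an irreducible component $Y$ of the special fiber $\calA_v$. We must show that $\hat T^{(s)}_\cZ\cdot [W]=0$ for every vertical cycle $W$ supported on $Y$ of complementary dimension. By Zhang's lemma and the projection formula for $p_2$, this quantity is
\[
\frac{1}{(2d+s)!}\,\hat c_1(\bar Q)^{2d+s}\cdot[\cZ\times W],
\]
an intersection on $\cZ\times\calA$ of the adelic bundle $\bar Q$ against a vertical cycle over $v$. Since $\bar Q$ is only adelic, the plan is to compute this as a limit over models. The canonical metric of $\bar L$ is a limit $\bar L=\lim_n \frac1{n^2}[n]^*\bar L_0$, where $\bar L_0$ is a fixed model metric. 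Hence $\bar Q$ is the limit of the model metrics $\bar Q_n$ on suitable models $\cX_n$ of $A\times A$ dominating $\cZ\times\calA$ via $[n]\times[n]$-type maps. The local intersection at $v$ is then the limit of intersection numbers of the vertical cycle $\pi_n^*[\cZ\times W]$ with $c_1(\mathcal Q_n)^{2d+s}$.

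The key step exploits the ``one differential from each factor'' structure non-archimedeanly. On the generic fiber, $Q$ restricted to $x\times A$ is algebraically trivial, and the cube theorem gives $([n]\times \id)^*\bar Q=n\bar Q$ and $(\id\times[n])^*\bar Q=n\bar Q$ exactly for the canonical metrics. The plan is to compute the intersection number $I$ in two ways. First, pull back along $[n]\times \id$, which scales the left side by $n^{2d+s}$. Second, note that the cycle $([n]\times\id)_*$-compatible pullback of $\cZ\times W$ has degree growth in $n$ of order at most $n^{2(d+1)-2}\cdot(\text{bounded})=n^{2d}$ on the vertical part. The reason is that $\cZ$ has relative dimension $d$, so its horizontal part contributes at most $2d$ powers of $n$. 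Comparing gives $n^{2d+s}I=O(n^{2d})$, so $I=0$ since $s\ge1$.

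Making the error term precise is the main obstacle. The models change with $n$, and we need a uniform bound on intersection numbers of vertical cycles of bounded class against nef-ish model bundles. Here I would use \cite[Corollary 3.16]{FulgerLehmann_PosConeCycle}: on the fixed special fiber component, numerical classes of cycles admit a norm such that intersection numbers against a fixed collection of Cartier divisors are bounded by the norms. To apply it, write $\bar Q=\bar L_1-\bar L_2$ as a difference of nef adelic bundles, for instance via $m^*\bar L+[-1]$-twists. Then $I$ expands into a signed sum of intersections of nef bundles, each bounded by a norm that grows at most like $n^{2d}$ after pulling back $\cZ$. Concretely, I would first treat $\cZ$ of degree growth under $[n]$ via the Beauville-type scaling on the generic fiber. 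Next I would control the vertical discrepancy between $\frac1{n^2}[n]^*\bar L_0$ and $\bar L$, which is $O(1/n^2)$ uniformly in sup-norm of Green functions. Finally, linearity in $\cZ$ and in $W$ extends the vanishing to all of $Z^{g-d}_{\mathrm{hor}}$ and to numerical triviality on $Y$.
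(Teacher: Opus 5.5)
Your homogeneity idea is a different packaging from the paper's, but as written the load-bearing step points the wrong way and is never actually proved. Write $I(\cZ)=\hat c_1(\bar Q)^{2d+s}\cdot[\cZ\times W]$.

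\textbf{The direction of the scaling.} If you pull $\cZ$ back by $[n]$, as your text says, the projection formula with $([n]\times\mathrm{id})^*\bar Q=n\bar Q$ gives $n^{2d+s}I([n]^*\cZ)=n^{2g}I(\cZ)$. On the other hand, $\deg_L([n]^*Z)=n^{2g-2d}\deg_L(Z)$. So a bound $|I([n]^*\cZ)|=O(n^{2g-2d})$ only yields $|I(\cZ)|=O(n^{s})$, which says nothing. The identity that helps is for push-forward: extend $[n]|_Z$ to a model of $Z$ and apply the projection formula to get $I([n]_*\cZ)=n^{2d+s}I(\cZ)$, together with $\deg_L([n]_*Z)=n^{2d}\deg_L(Z)$. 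Moreover, the $n^{2d}$ growth holds only for \emph{numerical} invariants of $Z$, because numerical classes on $A$ have Beauville degree $0$. The Chow components satisfy $[n]_*Z_{s'}=n^{2d+s'}Z_{s'}$. So a bound that sees the Chow class through ``Beauville-type scaling'' rather than through its degrees would grow at least like $n^{2d+s}$, and the comparison would collapse.

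\textbf{The missing estimate.} Even with the direction fixed, the whole content of the theorem is a uniform estimate $|I(\cZ')|\le C_W\deg_H(Z')$ for all effective horizontal $\cZ'$, with $C_W$ independent of $\cZ'$. Your proposal only asserts this. Proving it needs exactly the positivity mechanism of the paper:
\begin{itemize}
\item a nef majorant, for example $\bar M_1\pm\bar Q=((x,a)\mapsto i(x)\pm a)^*\bar L$ with $\bar M_1=\bar L_1+\bar L_2$;
\item pseudoeffectivity of every mixed product of the nef model approximants against a fiber component pulled back as an effective Cartier divisor. This is why the paper passes to a regular alteration. It ensures the Fulger--Lehmann norm of the signed expansion is bounded by the degree of the unsigned one.
\item domination of that component by the whole fiber, plus constancy of degree in flat families, to return to the generic fiber.
\end{itemize}

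\textbf{How the paper avoids varying the cycle.} The paper puts the scaling into the majorant instead: $\bar M_n\pm\bar Q=\frac1n((x,a)\mapsto[n]i(x)\pm a)^*\bar L$ is nef with $\bar M_n=n\bar L_1+n^{-1}\bar L_2$. Then, on one fixed model and one fixed component $E$, it gets $\|c_1(\bar Q)^{2d+s}\|\le\deg(M_n^{2d+s}H^{g-d-s})\le C_s n^{-s}$ on the generic fiber, using $L_1^{d+1}=0$. Your route is the same computation transported by the projection formula. It is repairable, but it costs new models and new bounds for each $[n]_*\cZ$.

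Finally, your $O(n^{-2})$ estimate on $\frac1{n^2}[n]^*\bar L_0-\bar L$ uses the same index for model approximation and for scaling, and it is not what is needed.
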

\begin{proof}
    Let $v$ be a prime of $\calO_K$. By de Jong’s alteration theorem \cite[Theorem 8.2]{deJongAlteration}, after a finite extension and an alteration, we may work on a regular projective model \(\mathcal Y\) mapping to a model of \(X=Z
    \times A\), with a strictly semistable special fiber at the place $v$ under consideration. Write \(Y\) for its generic fiber and pull back all the bundles under consideration to \(Y\).

    Let $E\subset \cY_v$ be an irreducible component of the special fiber above $v$, so $\dim E=d+g$. 
    Fix a relatively ample line bundle \(H\) on \(\mathcal Y\), and set \(h=H|_E\). For the ample line bundle \(h\) on \(E\), by \cite[Corollary~3.16]{FulgerLehmann_PosConeCycle}, there is a \textit{norm} on \(N_{g-d-s}(E)_\RR\) such that
    \[
        \|\alpha\|=\deg(h^{g-d-s}\cdot\alpha)
        \quad\text{for every pseudoeffective class }\alpha.
    \]

    The desired assertion is that the element $$c_1(\bar Q)^{2d+s}\in N_{g-d-s}(E)_\RR$$ is identically 0. 
    
    We will prove this assertion by estimating the error term in the limiting process with respect to the norm introduced above.

    Write $\bar L_1=p_{1,Z}^*\bar L|_Z$ and $\bar L_2=p_2^*\bar L$. For each positive integer \(n\), set
    \[
        \bar M_n=n\bar L_1+n^{-1}\bar L_2,
        \qquad
        \bar B_n^\pm=\bar M_n\pm\bar Q.
    \]
    Both \(\bar B_n^\pm\) are nef, since $\bar L$ is nef by \cite[Theorems~6.1.1 and~6.1.3]{YuanZhang_Quasiproj} and
    \[
        \bar M_n\pm\bar Q=\frac1n\bigl((x,a)\mapsto[n]i(x)\pm a\bigr)^*\bar L.
    \]
    Fix \(n\). Choose nef model $\Q$-line bundles \(B_{n,j}^\pm\) approximating $\bar B_n^\pm$ on common refinements (for both choices of $\pm$)
    \[
        \pi_j:\mathcal Y_j\longrightarrow\mathcal Y,
    \]and put
    \[
    Q_{n,j}=\frac{B_{n,j}^+-B_{n,j}^-}{2},
    \qquad
    M_{n,j}=\frac{B_{n,j}^++B_{n,j}^-}{2}.
    \]
    These converge respectively to \(\bar Q\) and \(\bar M_n\).
    As $\cY$ is regular, $E$ is Cartier. Write $E_j:=\pi_j^* E$ for the pullback of $E$ as a Cartier divisor, and set $\pi_{j,E}=\pi_j|_{E_j}$.

    Define the following numerical classes on the fixed variety \(E\):
    \[
        \beta_{n,j}=(\pi_{j,E})_*\bigl(c_1(Q_{n,j})^{2d+s}[E_j]\bigr),
    \]
    and
    \[
    \gamma_{n,j}=(\pi_{j,E})_*\bigl(c_1(M_{n,j})^{2d+s}[E_j]\bigr).
    \]
    Expand the expression of $\beta_{n,j}$ using \(Q_{n,j}=\frac{B_{n,j}^+-B_{n,j}^-}{2}\). Here every mixed product of $B_{n,j}^\pm$ is a product of nef bundles with an effective class \([E_j]\), and is therefore pseudoeffective. So is its push-forward. Removing the signs in this expansion gives exactly \(\gamma_{n,j}\). Consequently, applying the triangle inequality for the norm defined by $h$ and noting that every term in the expansion is pseudoeffective, we get
    \[
    \|\beta_{n,j}\|\le\deg_h(\gamma_{n,j}).
    \]
    We can now estimate everything on the special fiber:
    \[
        \begin{aligned}
        \|\beta_{n,j}\|
        &\le
        \deg\!\left(M_{n,j}^{2d+s}\,\pi_j^*H^{g-d-s}\cdot E_j\right)\\
        &\le
        \deg\!\left(M_{n,j}^{2d+s}\,\pi_j^*H^{g-d-s}\cdot\mathcal Y_{j,v}\right)\\
        &=\deg_Y(M_n^{2d+s} H^{g-d-s}).
\end{aligned}
\]The second inequality uses $0\le E_j\le \cY_{j,v}$ and nefness. The last equality follows from the constancy of the intersection degree in a flat projective family.

Since \(L_1\) factors through \(Z\), we have \(L_1^{d+1}=0\). Therefore
\[
\begin{aligned}
\deg_Y(M_n^{2d+s} H^{g-d-s})
&=\sum_{r=0}^{d}\binom {2d+s}r n^{2r-2d-s}
  \deg_Y(L_1^rL_2^{2d+s-r}H^{g-d-s})\\
&\le \frac{C_s}{n^s},
\end{aligned}
\]where
\[
C_s=\deg_Y((L_1+L_2)^{2d+s} H^{g-d-s})
\]is independent of both \(n\) and \(j\). Here $2r-2d-s\leq -s$ for $r\leq d$, and all intersection degrees in the sum are non-negative.
Thus we have proved the uniform inequality
\[
\|\beta_{n,j}\|\le C_s/n^s.
\]

Finally, for every fixed $n$, the classes $\beta_{n,j}$ converge in
$N_{g-d-s}(E)_\RR$ to the class $c_1(\bar Q)^{2d+s}$
as $j\to\infty$, by the proof of Zhang's lemma~\cite[Lemma 2.1.2]{Zhang_GScycle}
applied to $\cY$ with all line bundles equal to $\bar Q$.
Thus
\[
\|c_1(\bar Q)^{2d+s}\|\leq \frac{C_s}{n^s}.
\]
Since $s>0$, letting $n\to\infty$ and pushing down along the alteration
to the model of $X=Z\times A$, and then along $p_2$, gives the desired assertion.
\end{proof}

We have therefore proved the following:
\begin{cor}\label{cor:admissible-components}
    The arithmetic Chow cycle $\hat T^{(s)}_\cZ$ is an admissible extension of $T_Z^{(s)}$. In other words, the image of $\hat T^{(s)}_{(\cdot)}$ lies in $\hat {\mathrm{Ch}}_{g+1-d-s}(\calA)_{\mathrm{adm}}$, the subgroup consisting of admissible extensions of cycles.
\end{cor}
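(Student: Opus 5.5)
This corollary is essentially a bookkeeping consequence of the preceding Proposition and Theorem~\ref{thm:finite-admissibility}. I would check the two conditions in the definition of an admissible extension (Section~\ref{sec:heights}) one generator at a time, and then extend by linearity.

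First, fix an integral horizontal $\cZ$ with generic fiber $Z$. Part (1) of the Proposition shows that $\hat T^{(s)}_\cZ$ restricts to $T^{(s)}_Z$ on the generic fiber. So $\hat T^{(s)}_\cZ$ is an arithmetic extension of $T^{(s)}_Z$.

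Next I would confirm that $T^{(s)}_Z$ is homologically trivial, so that the notion of admissible extension applies. The curvature computation in part (2) of the Proposition already shows this. The form $c_1(\bar Q_\CC)^{2d+s}$ restricts to zero on $Z(\CC)\times A(\CC)$, and the cohomology class of $T^{(s)}_Z$ is its push-forward. Alternatively, $T^{(s)}_Z$ has Beauville degree $s>0$, and $[n]^*$ acts on $H^{2(g-d-s)+\ldots}$ with the wrong weight, so its class vanishes.

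Part (2) of the Proposition then gives the archimedean condition $\omega(\hat T^{(s)}_\cZ)=0$. Theorem~\ref{thm:finite-admissibility} gives the non-archimedean condition: the cycle part restricts to a numerically trivial class on every irreducible component of every special fiber.

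Finally, both conditions are linear in the cycle. The map $\hat T^{(s)}_{(\cdot)}$ is defined on $Z^{g-d}_{\mathrm{hor}}(\calA)$ by extending linearly from integral flat $\cZ$, and a $\QQ$-linear combination of admissible extensions is an admissible extension of the corresponding combination. Hence the image lies in $\hat{\mathrm{Ch}}_{g+1-d-s}(\calA)_{\mathrm{adm}}$.

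The only delicate point is in the reduction step of Theorem~\ref{thm:finite-admissibility}. Numerical triviality there was established on components of the special fiber of an alteration $\cY$, and then pushed down along the alteration and along $p_2$. I would check that this push-forward really yields numerical triviality on every component $E'$ of $\calA_v$.

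To do so, take a nef or any line bundle class $\alpha$ on $E'$ and pair it with the restriction of $\hat T^{(s)}_\cZ$. By the projection formula, this equals the pairing of the pulled-back classes with $c_1(\bar Q)^{2d+s}$ on the components of $\cY_v$ lying over $E'$. These components are weighted by multiplicities from the finite extension, and the degree of the alteration is divided out. Each such pairing is zero, since the class vanishes in $N_{g-d-s}(E)_\RR$.

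I expect this compatibility, namely that restriction to special fiber components commutes with proper push-forward up to multiplicities, to be the main thing to state carefully. It follows from the projection formula, since restriction to $E'$ is intersection with the Cartier divisor $E'$ on the regular model.
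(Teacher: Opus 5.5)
Your proposal is correct and follows the paper's route. The paper obtains the corollary by combining parts (1) and (2) of the preceding Proposition (generic fiber and vanishing curvature) with Theorem~\ref{thm:finite-admissibility}, then extending by linearity; your paragraph on pushing forward along the alteration and $p_2$ spells out what the paper compresses into the last sentence of that theorem's proof.

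Two small slips. In your alternative homological-triviality argument the relevant group is $H^{2(d+s)}(A)$. In the push-forward check, you should not pair the $(g-d-s)$-dimensional restriction to $E'$ with a single line bundle class. Pair it instead with degree-$(g-d-s)$ polynomials in Chern classes of vector bundles on $E'$, or with vertical cycles of dimension $d+s$ on $\calA$. Your projection-formula argument goes through unchanged.
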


We now turn to the proof of Corollary~\ref{cor:positive-admissible}. In fact, it is a consequence of the bijectivity of the Fourier--Mukai transform on the generic fiber.

\begin{prop}\label{prop:positive-surjectivity}
    For $0\leq d<g$ and $1\leq s\leq g-d$, the map
    $$T^{(s)}:\mathrm{Ch}^{g-d}(A)\longrightarrow\mathrm{Ch}^{d+s}(A)$$ on the generic fiber has image
    $\mathrm{Ch}^{d+s}_s(A)$ and induces an isomorphism
    \[
    T^{(s)}:\mathrm{Ch}^{g-d}_s(A)\xrightarrow{\ \sim\ }
    \mathrm{Ch}^{d+s}_s(A).
    \]
    Consequently, every class in $\mathrm{Ch}^q_{>0}(A)$ admits an admissible
    extension, proving Corollary~\ref{cor:positive-admissible}.
\end{prop}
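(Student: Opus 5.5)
We deduce the proposition from Beauville's description of how the Fourier--Mukai transform interacts with the Beauville decomposition. Assume first that $L$ is a principal polarization, or more generally use Mukai's inversion formula: $\cF_L\circ\cF_L=(-1)^g\chi(L)\,[-1]^*\circ\phi_L^*$-type formula. When $\phi_L$ is not an isomorphism, it is an isogeny and hence invertible on $\QQ$-Chow groups. Beauville's key fact, \cite{Beauville1986}, is
\[
\cF\bigl(\mathrm{Ch}^q_s(A)\bigr)=\mathrm{Ch}^{g-q+s}_s(A).
\]
This follows by comparing $[n]^*$ and $[n]_*$ through $(\id\times[n])^*Q=([n]\times\id)^*Q$, together with $[n]_*[n]^*=n^{2g}$. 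Since $\cF_L$ is invertible on $\mathrm{Ch}^*(A)_\QQ$, each restriction $\mathrm{Ch}^q_s(A)\to\mathrm{Ch}^{g-q+s}_s(A)$ is an isomorphism.

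Next we identify $T^{(s)}$ with a homogeneous piece of $\cF_L$. For $z\in\mathrm{Ch}^{g-d}(A)$, the codimension $d+s$ component of $\cF_L(z)$ is
\[
p_{2,*}\bigl(p_1^*z\cdot c_1(Q)^{k}\bigr)/k!,
\]
where $k$ satisfies $(g-d)+k-g=d+s$, that is, $k=2d+s$. This is exactly $T^{(s)}(z)$. We decompose $z=\sum_t z_t$ with $z_t\in\mathrm{Ch}^{g-d}_t(A)$. Then $\cF_L(z_t)\in\mathrm{Ch}^{d+t}_t(A)$, so the codimension $d+s$ part of $\cF_L(z)$ is $\cF_L(z_s)$. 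Hence $T^{(s)}(z)=\cF_L(z_s)$. Its image is therefore $\cF_L(\mathrm{Ch}^{g-d}_s(A))=\mathrm{Ch}^{d+s}_s(A)$, and $T^{(s)}$ restricted to $\mathrm{Ch}^{g-d}_s$ is the isomorphism above. (For negative $t$, which is conjecturally empty, the same bookkeeping still applies.)

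For the consequence, let $w\in\mathrm{Ch}^q_s(A)$ with $s>0$ and $1\le q\le g$. Put $d=q-s$. We must have $0\le d<g$ and $s\le g-d$, the latter being $q\le g$. The case $d<0$ cannot occur: Beauville shows $\mathrm{Ch}^q_s=0$ unless $q-g\le s\le q$, so $s\le q$ and $d\ge0$. Also $d<g$ holds since $s\ge1$. By the surjectivity above, $w=T^{(s)}(z)$ for some $z\in\mathrm{Ch}^{g-d}(A)$. Represent $z$ by a cycle $\sum a_iZ_i$ of dimension $d$ and take the Zariski closures $\cZ_i\subset\calA$, which are flat integral $(d+1)$-dimensional subschemes. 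By linearity and Corollary~\ref{cor:admissible-components}, $\sum a_i\hat T^{(s)}_{\cZ_i}$ is an admissible extension of $T^{(s)}(z)=w$. Admissible extensions are additive, and for general $w\in\mathrm{Ch}^q_{>0}$ we sum over the components $w_s$. This holds after the finite semistable base change used in Section~\ref{sec:admissible-fourier}; to descend to $K$ itself, we note that the pairing is compatible with $[F:K]$ as recalled earlier, or we rely on the corollary's semistability hypothesis.

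The main obstacle is the precise Beauville statement $\cF(\mathrm{Ch}^q_s)=\mathrm{Ch}^{g-q+s}_s$ for non-principal $L$, together with the invertibility of $\cF_L$ rationally. We would reduce to Beauville's principally polarized $\cF$ on $A\times\hat A$ via $Q=(\id\times\phi_L)^*\mathcal P$. Then $\cF_L=\phi_L^*\circ\cF_{\mathcal P}$, and $\phi_L^*$ is an isomorphism on $\QQ$-Chow groups that commutes with $[n]^*$ and preserves codimension.
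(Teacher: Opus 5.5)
Your proposal is correct and is essentially the paper's argument. It rests on the same ingredients:
\begin{itemize}
\item Beauville's compatibility $\cF_L(\mathrm{Ch}^q_s(A))=\mathrm{Ch}^{g-q+s}_s(A)$;
\item rational invertibility of $\cF_L$, for which your factorization $\cF_L=\phi_L^*\circ\cF_{\mathcal P}$ through the isogeny $\phi_L$ is a clean justification;
\item the identification $T^{(s)}(z)=\cF_L(z_s)$;
\item horizontal closures fed into Corollary~\ref{cor:admissible-components}, where your appeal to Beauville's bound $s\le q$ plays the role of the paper's remark that the Fourier preimage would lie in $\mathrm{Ch}^{g-q+s}(A)=0$.
\end{itemize}

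One caveat on your parenthetical derivation of Beauville's fact. The symmetry $(\id\times[n])^*Q=([n]\times\id)^*Q$ together with $[n]_*[n]^*=n^{2g}$ only fixes the $[n]^*$-eigenvalue of each graded piece of $\cF_L(z)$. To force everything into codimension $g-q+s$, you also need the homogeneity $(\id\times[n])^*Q=nQ$. This makes the $c_1(Q)^j$-term of $\cF_L(z)$ an $[n]^*$-eigenvector with eigenvalue $n^j$.
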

\begin{proof}
    By the Beauville decomposition and Fourier inversion, we have an isomorphism
    \[
    \calF_L:\mathrm{Ch}^{g-d}_t(A)\xrightarrow{\ \sim\ }
    \mathrm{Ch}^{d+t}_t(A).
    \]
    Here the polarization induced by $L$ is an isogeny, so the Fourier transform
    remains invertible with rational coefficients. The component
    $\calF_L(z_t)$ has codimension $d+t$. Hence
    \[
    T_Z^{(s)}=\mathrm{pr}_{d+s}\calF_L(Z)=\calF_L(Z_s),
    \]
    which proves the image and isomorphism assertions.

    Now fix $q$ and $z_s\in\mathrm{Ch}^q_s(A)$ with $s>0$.
    If $s>q$, Fourier inversion places its preimage in
    $\mathrm{Ch}^{g-q+s}_s(A)=0$, so $z_s=0$.
    Otherwise, take $d=q-s$. The isomorphism above gives a class
    $w_s\in\mathrm{Ch}^{g-q+s}_s(A)$ such that $T^{(s)}_{w_s}=z_s$.
    Choose a cycle representing $w_s$ and let $\cW_s$ be the corresponding
    linear combination of its horizontal closures in $\calA$.
    By Corollary~\ref{cor:admissible-components}, $\hat T^{(s)}_{\cW_s}$
    is an admissible extension of $z_s$.
    Summing over $s>0$ proves the assertion for $\mathrm{Ch}^q_{>0}(A)$.
\end{proof}

\section{Proof of the main theorem}
First assume $d<g$. After a finite extension of $K$, assume that $A$ has semistable reduction; the normalized heights are unchanged. As in the previous section, fix a projective regular model $\calA/\calO_K$ of $A$, and let $\cV=\sum_i a_i[\cV_i]$ be the linear combination of the Zariski closures of the $V_i$ in $\cA$. Write $\cV^-$ for the corresponding horizontal closure of $[-1]_*V$.\footnote{In fact, K\"unnemann's result \cite[Theorem~3.5(v)]{Kunnemann_ProjRegModel_BBht} implies that we can pick a projective regular model $\cA$ such that $[-1]$ can be extended to an automorphism of $\cA$. So $\cV^-$ is actually the pushforward of $\cV$ under such an extension of $[-1]$.} Applying the results of the previous section term by term to these horizontal cycles, and writing $$\mathrm{Ce}(\cV)=\frac 1 2(\cV-\cV^-),$$ we immediately obtain the following:
\begin{thm}
    $\hat T_{\mathrm{Ce}(\cV)}$ is an admissible extension of $T_{\mathrm{Ce}(V)}$.
\end{thm}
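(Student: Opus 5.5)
The plan is to reduce the statement to Corollary~\ref{cor:admissible-components} by linearity. Since $\hat T_{(\cdot)}=\hat T^{(1)}_{(\cdot)}$ is defined on $Z^{g-d}_{\mathrm{hor}}(\calA)$ by extending the construction for integral flat subschemes linearly, we have
\[
\hat T_{\mathrm{Ce}(\cV)}=\frac12\Bigl(\sum_i a_i\hat T_{\cV_i}-\sum_i a_i\hat T_{\cV_i^-}\Bigr),
\]
where $\cV_i^-$ denotes the horizontal closure of $[-1]_*V_i$. Here $[-1]_*V_i=[-1](V_i)$ is again an integral subvariety of dimension $d$, since $[-1]$ is an automorphism. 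Each $\cV_i$ and $\cV_i^-$ is an integral $(d+1)$-dimensional subscheme of $\calA$, flat over $\calO_K$, so it is exactly of the type treated in Section~\ref{sec:admissible-fourier} with $s=1$. The hypothesis $1\le s\le g-d$ holds because we are assuming $d<g$.

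Applying Corollary~\ref{cor:admissible-components} to each of these pieces, every summand is admissible. Its curvature vanishes at each archimedean place by part~(2) of the Proposition. Its cycle part is numerically trivial on every component of every special fiber by Theorem~\ref{thm:finite-admissibility}. Both conditions are linear in the arithmetic class: curvature is additive, and numerical triviality on a fixed component is preserved under $\QQ$-linear combinations (with real coefficients on the vertical parts). Hence the linear combination $\hat T_{\mathrm{Ce}(\cV)}$ lies in $\hat{\mathrm{Ch}}_{g-d}(\calA)_{\mathrm{adm}}$.

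It remains to identify the generic fiber. By part~(1) of the Proposition, $\hat T_{\cV_i}$ restricts to $T_{V_i}$ and $\hat T_{\cV_i^-}$ restricts to $T_{[-1]_*V_i}$. Since $T=\mathrm{pr}_{d+1}\circ\calF_L$ is linear on $\mathrm{Ch}^{g-d}(A)$, the generic fiber of $\hat T_{\mathrm{Ce}(\cV)}$ is $T_{\mathrm{Ce}(V)}$.

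We also need $T_{\mathrm{Ce}(V)}$ to be homologically trivial, so that "admissible extension" makes sense. This follows from Proposition~\ref{prop:positive-surjectivity}, which puts it in Beauville degree $1$. Alternatively, its cohomology class is the push-forward of a class that vanishes already at the form level, by the curvature computation.

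There is no real obstacle here, since the work was done in Section~\ref{sec:admissible-fourier}. The only point needing care is bookkeeping. Zhang's lemma and the finite-place argument are stated for a single integral flat $\cZ$, and one must check that the linear extension of $\hat T$ is compatible with the definition of admissibility, including the real vertical coefficients allowed by the conventions. Possibly one also has to pass to a common finite extension $F/K$ to make all components defined. That is harmless by the projection formula for the Beilinson--Bloch pairing recalled in Section~\ref{sec:heights}.
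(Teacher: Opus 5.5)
Your proposal is correct and follows the paper's route: the paper also obtains this statement by applying Corollary~\ref{cor:admissible-components} term by term to the horizontal closures $\cV_i$ and $\cV_i^-$, then using linearity of $\hat T_{(\cdot)}$ in $\mathrm{Ce}(\cV)=\frac 1 2(\cV-\cV^-)$. Your extra bookkeeping spells out what the paper leaves implicit: both admissibility conditions are linear, the generic fiber is identified by part~(1), and homological triviality follows from Beauville degree $1$ (or from the vanishing of the invariant form).
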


Therefore, \begin{align*}
h_{\mathrm{BB},\cF}(\mathrm{Ce}(V))&=\frac{(-1)^d}{[K:\Q]}\langle\mathrm{Ce}(V),T_{\mathrm{Ce}(V)}\rangle_{\mathrm{BB}}\\
&=\frac{(-1)^d}{[K:\Q]}\hat\deg_\cA(\mathrm{Ce}(\cV)\cdot \hat T_{\mathrm{Ce}(\cV)})
\end{align*}

There are four terms in the expansion of the arithmetic intersection by bilinearity, as $\mathrm{Ce}(\cV)=\frac 1 2(\cV-\cV^-)$. All products below are interpreted term by term, so that $[\cV\times\cV]=\sum_{i,j}a_i a_j[\cV_i\times\cV_j]$ includes the mixed terms.
Before multiplying by $(-1)^d/[K:\Q]$, each term contributes $\frac 1 {4(2d+1)!}\bar Q^{2d+1}\cdot [\cV\times\cV]$. 
For example, the term involving $\cV,\cV$ is
\small$$\frac 1 4\hat\deg_\cA(\cV\cdot \hat T_{\cV})=\frac 1 {4(2d+1)!}\hat \deg([\cA\times\cV]\cdot \hat c_1(\bar Q)^{2d+1}\cdot[\cV\times \cA])= \frac 1 {4(2d+1)!}\bar Q^{2d+1}\cdot [\cV\times\cV].$$
\normalsize
A similar computation applies to the remaining terms, using $([-1]\times \id)^*\bar Q=-\bar Q=(\id\times [-1])^*\bar Q$ and the oddness of $2d+1$.

Therefore, adding the four terms, we obtain
\begin{align*}
h_{\mathrm{BB},\cF}(\mathrm{Ce}(V))
&=\frac{(-1)^d}{[K:\Q]}\hat\deg_\cA(\mathrm{Ce}(\cV)\cdot \hat T_{\mathrm{Ce}(\cV)})\\
&=\frac{(-1)^d}{(2d+1)![K:\Q]}\bar Q^{2d+1}\cdot [\cV\times\cV],
\end{align*}
This is the adelic intersection $(-1)^d\bar Q^{2d+1}\cdot[V\times V]/((2d+1)![K:\Q])$ on the generic fiber. If $d=g$, then $V$ is a multiple of $[A]$ and $\mathrm{Ce}(V)=0$; the intersection on the right also vanishes, since $[-1]\times\id$ preserves $[A\times A]$ and negates $\bar Q^{2g+1}$. This proves Theorem~\ref{thm:main} in all dimensions.

\section{Comparison with the height of Lefschetz type}\label{sec:lefschetz}
Assume $g\geq1$. Write $\mathsf L$ for the Lefschetz operator
$z\mapsto c_1(L)\cdot z$ on $\mathrm{Ch}^*(A)$, and recall that
$h^0(L)=\deg(\mathsf L^g[A])/g!$.

\subsection{Primitive Chow groups}
Beauville defined an $\mathrm{SL}_2$-action on $\mathrm{Ch}^*(A)$, which supplies a Lefschetz decomposition on each piece $\mathrm{Ch}^q_s(A)$ in \cite[Theorem~4.2 and \textsection~5]{Beauville_SL2}, which we briefly recall below. 

If $*$ denotes the Pontryagin product, the lowering operator of the associated $\mathfrak{sl}_2$-action is defined by
\[
\Lambda z=\frac{\mathsf L^{g-1}[A]}{h^0(L)(g-1)!}*z.
\]
Define the primitive part of $\mathrm{Ch}^q_s(A)$ by
\[
P^q_s(A)=\ker\bigl(\Lambda:\mathrm{Ch}^q_s(A)
\longrightarrow\mathrm{Ch}^{q-1}_s(A)\bigr).
\]
By \cite[Proposition~5.2]{Beauville_SL2}, this space is zero if $2q>g+s$;
otherwise, we have
\[
P^q_s(A)=\ker\bigl(\mathsf L^{g+s-2q+1}|_{\mathrm{Ch}^q_s(A)}\bigr).
\]
Any nonzero $u\in P^q_s(A)$ generates an irreducible $\mathrm{SL}_2$-subrepresentation, with basis
$$u,\mathsf Lu,\ldots,\mathsf L^{g+s-2q}u.$$
By \cite[Corollary~5.3]{Beauville_SL2}, we have
\[
\mathrm{Ch}^q_s(A)=
\bigoplus_{j=\max\{0,2q-g-s\}}^q\mathsf L^jP^{q-j}_s(A).
\]
In particular, taking $s=1$, then for $1\leq q\leq g$, every element
$z\in\mathrm{Ch}^q_1(A)$ has a unique decomposition
\begin{equation}\label{eq:primitive-decomposition}
z=\sum_{j=\max\{0,2q-g-1\}}^{q-1}z_j,
\qquad z_j=\mathsf L^j u_j,\quad u_j\in P^{q-j}_1(A).
\end{equation}
Here $P^0_1(A)=0$, so the term for $j=q$ is zero.

We end the discussion of Lefschetz operators with the following well-known lemma, for which we provide a proof for the convenience of readers.

\begin{lemma}\label{lem:lefschetz-self-adjoint}
The Lefschetz operator $\mathsf L$ is self-adjoint for the Beilinson--Bloch
height pairing. More precisely, let $x\in\mathrm{Ch}^a(A)$ and
$y\in\mathrm{Ch}^b(A)$ be homologically trivial classes admitting admissible
extensions on a common projective regular model of $A$. For every integer
$r\geq0$ with $a+b+r=g+1$, we have
\[
\langle\mathsf L^r x,y\rangle_{\mathrm{BB}}
=\langle x,\mathsf L^r y\rangle_{\mathrm{BB}}.
\]
\end{lemma}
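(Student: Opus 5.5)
The plan is to construct admissible extensions of $\mathsf L^r x$ and $\mathsf L^r y$ directly from those of $x$ and $y$. We cap with an arithmetic first Chern class of a metrized extension of $L$ that has vanishing curvature obstruction at the archimedean places and no obstruction at the finite ones. Then we compute both pairings as the same arithmetic intersection number. Fix a projective regular model $\calA$ on which $x,y$ have admissible extensions $\hat x,\hat y$. Choose a hermitian line bundle $\hat{\mathcal L}=(\mathcal L,\|\cdot\|)$ on $\calA$ extending $L$; any model $\mathcal L$ and any smooth metric will do, say the metric with translation-invariant curvature form. The key claim is that $\hat c_1(\hat{\mathcal L})^r\cdot\hat x$ is an extension of $\mathsf L^r x$ that is good enough for the pairing. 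We do not need it to be admissible itself, only that pairing it against the admissible $\hat y$ computes $\langle\mathsf L^r x,y\rangle_{\mathrm{BB}}$.

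Here I use the defining property recalled in Section~\ref{sec:heights}: $\langle z,w\rangle_{\mathrm{BB}}=\hat\deg(\hat z\cdot\hat w)$ where only \emph{one} of the two classes needs to be admissible and the other can be an arbitrary arithmetic extension. Apply this with $z=y$, which is admissible via $\hat y$, and $w=\mathsf L^r x$, using the arbitrary extension $\hat c_1(\hat{\mathcal L})^r\cdot\hat x$. This gives
\[
\langle \mathsf L^r x,y\rangle_{\mathrm{BB}}=\hat\deg\bigl(\hat y\cdot\hat c_1(\hat{\mathcal L})^r\cdot\hat x\bigr).
\]
Symmetrically, with $\hat x$ admissible and $\hat c_1(\hat{\mathcal L})^r\cdot\hat y$ an arbitrary extension of $\mathsf L^r y$,
\[
\langle x,\mathsf L^r y\rangle_{\mathrm{BB}}=\hat\deg\bigl(\hat x\cdot\hat c_1(\hat{\mathcal L})^r\cdot\hat y\bigr).
\]
Commutativity and associativity of the arithmetic intersection product on the regular model $\calA$ (Gillet--Soulé) then identify the two right-hand sides, which proves the lemma. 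The pairing is symmetric, since $\hat\deg$ of a product does not depend on the order, so I need not worry about which slot is which.

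The one point to check, and the main potential obstacle, is that $\hat c_1(\hat{\mathcal L})^r\cdot\hat x$ really is an arithmetic extension of $\mathsf L^r x$. It is a class in $\widehat{\mathrm{Ch}}^{a+r}(\calA)$ whose generic fiber is $c_1(L)^r\cdot x$, since the restriction to the generic fiber is a ring homomorphism. I would also double-check that the recalled independence statement genuinely allows a non-admissible class in one slot. It does: if $\hat w'$ is another extension, then $\hat w-\hat w'$ is supported on vertical cycles and classes $(0,\eta)$. It pairs to zero with the admissible $\hat z$ because of numerical triviality on fiber components and because $\omega(\hat z)=0$. This is exactly the content of ``$\hat w$ is any arithmetic extension'' in Section~\ref{sec:heights}, so I would cite it rather than reprove it. If one prefers extensions over a finite extension $F$, the projection formula recalled there lets us descend the identity by dividing by $[F:K]$.
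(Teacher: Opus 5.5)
Your core computation matches the paper's. With $\hat\ell=\hat c_1(\overline{\mathcal L})$ for a hermitian extension of $L$, the two sides become $\hat\deg_{\calA}(\hat\ell^r\hat x\cdot\hat y)$ and $\hat\deg_{\calA}(\hat x\cdot\hat\ell^r\hat y)$, which agree by commutativity and associativity of the arithmetic intersection product. The one place you depart from the paper is also the one missing step. You explicitly decline to show that $\hat\ell^r\hat x$ is admissible, on the grounds that only one slot of the pairing needs an admissible class. That misreads the convention of Section~\ref{sec:heights}. There, $\langle z,w\rangle_{\mathrm{BB}}$ is defined only for $z,w$ that \emph{both} admit admissible extensions; the phrase ``$\hat w$ is any arithmetic extension'' only says how to compute it once that holds, with the admissible class in the first slot. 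So, as written, you have not shown that $\langle\mathsf L^r x,y\rangle_{\mathrm{BB}}$ and $\langle x,\mathsf L^r y\rangle_{\mathrm{BB}}$ are defined. Your evaluation of $\langle\mathsf L^r x,y\rangle_{\mathrm{BB}}$ with the admissible $\hat y$ in the second slot (``symmetry, so I need not worry about which slot is which'') is also unjustified. In this setup, symmetry of the pairing is proved by putting admissible extensions in both slots, so it needs $\mathsf L^r x$ to admit one.

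The fix is short, and it is exactly what the paper does: show that $\hat\ell^r\hat x$ is itself admissible. Its curvature is $\omega(\hat\ell)^r\wedge\omega(\hat x)=0$. For every vertical class $v$ one has $(\hat\ell^r\hat x)\cdot v=\hat x\cdot(\hat\ell^r v)=0$, because $\hat\ell^r v$ is again vertical and $\hat x$ is numerically trivial on the components of the special fibers. The same holds for $\hat\ell^r\hat y$. With this, both pairings are defined and can be computed with these extensions on the common model, and your final identification goes through unchanged. Under a more permissive convention, where the pairing is defined as soon as one argument has an admissible extension, your argument would already be complete. That is not the convention fixed in this paper, and the preservation of admissibility is part of what the lemma delivers.
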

\begin{proof}
Choose admissible extensions $\hat x,\hat y$ on a projective regular model
$\calA$, and extend $L$ to a hermitian line bundle $\overline{\mathcal L}$
on $\calA$. Put $\hat\ell=\hat c_1(\overline{\mathcal L})$.
Multiplication by $\hat\ell^r$ preserves admissibility: the curvature remains
zero, and for every vertical arithmetic class $v$,
$(\hat\ell^r\hat x)\cdot v=\hat x\cdot(\hat\ell^r v)=0$,
since $\hat\ell^r v$ is again vertical.
The definition of the height pairing therefore gives
\[
\langle\mathsf L^r x,y\rangle_{\mathrm{BB}}
=\hat\deg_{\calA}(\hat\ell^r\hat x\cdot\hat y)
=\hat\deg_{\calA}(\hat x\cdot\hat\ell^r\hat y)
=\langle x,\mathsf L^r y\rangle_{\mathrm{BB}}.
\]
\end{proof}

\subsection{The two Beilinson--Bloch heights}
For $z\in\mathrm{Ch}^q_1(A)$, our normalization reads
\[
h_{\mathrm{BB},\cF}(z)
=\frac{(-1)^{g-q}}{[K:\Q]}\langle z,\cF_Lz\rangle_{\mathrm{BB}}.
\]
We assume $d<g$ throughout this section. For a cycle $V$ of pure dimension $d$, we write $$V=\sum_s V_s\qquad V_s\in\mathrm{Ch}^{g-d}_s(A).$$ 
\begin{prop}\label{prop:BBFourierV=V1}
	We have $$h_{\mathrm{BB},\cF}(\mathrm{Ce}(V))=h_{\mathrm{BB},\cF}(\mathrm{Ce}(V_1)).$$
\end{prop}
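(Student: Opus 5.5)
We will show that only the Beauville degree $1$ piece of $V$ contributes to either argument of the pairing that defines $h_{\mathrm{BB},\cF}(\mathrm{Ce}(V))$. First we compute $\mathrm{Ce}(V)$ componentwise. Since $[-1]_*=[-1]^*$ acts on $\mathrm{Ch}^{g-d}_s(A)$ by $(-1)^{2(g-d)-s}=(-1)^s$, we get
\[
\mathrm{Ce}(V)=\tfrac12\sum_s(1-(-1)^s)V_s=\sum_{s\text{ odd}}V_s,
\]
and likewise $\mathrm{Ce}(V_1)=V_1$. Next, the proof of Proposition~\ref{prop:positive-surjectivity} gives $T^{(1)}_{\mathrm{Ce}(V)}=\mathrm{pr}_{d+1}\cF_L(\mathrm{Ce}(V))=\cF_L(V_1)$, because $\cF_L$ sends $\mathrm{Ch}^{g-d}_t$ into $\mathrm{Ch}^{d+t}_t$. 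Hence
\[
h_{\mathrm{BB},\cF}(\mathrm{Ce}(V))=\frac{(-1)^d}{[K:\Q]}\Bigl\langle \sum_{s\text{ odd}}V_s,\ \cF_L(V_1)\Bigr\rangle_{\mathrm{BB}},
\]
and $h_{\mathrm{BB},\cF}(\mathrm{Ce}(V_1))$ is the same expression with only the $s=1$ term in the first argument. So it suffices to prove
\[
\langle V_s,w\rangle_{\mathrm{BB}}=0\qquad\text{for all odd } s\geq3 \text{ and } w=\cF_L(V_1)\in\mathrm{Ch}^{d+1}_1(A).
\]

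To show this vanishing we use the compatibility of the Beilinson--Bloch pairing with isogenies: for $n\geq2$,
\[
\langle [n]^*x,[n]^*y\rangle_{\mathrm{BB}}=\deg[n]\cdot\langle x,y\rangle_{\mathrm{BB}}=n^{2g}\langle x,y\rangle_{\mathrm{BB}}.
\]
This follows from the projection formula $\langle [n]^*x,y'\rangle=\langle x,[n]_*y'\rangle$ together with $[n]_*[n]^*=n^{2g}$. We apply it to $x=V_s\in\mathrm{Ch}^{g-d}_s$ and $y=w\in\mathrm{Ch}^{d+1}_1$, where $[n]^*$ acts on $x$ by $n^{2(g-d)-s}$ and on $y$ by $n^{2(d+1)-1}$. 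The left side is then $n^{2g+1-s}\langle x,y\rangle_{\mathrm{BB}}$. Comparing with $n^{2g}\langle x,y\rangle_{\mathrm{BB}}$ forces $\langle x,y\rangle_{\mathrm{BB}}=0$ whenever $s\neq1$.

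The main obstacle is justifying the pullback compatibility of the Beilinson--Bloch pairing under the finite flat map $[n]$. This needs admissible extensions of $V_s$ and of $w$ on a common model, together with a model of $[n]$. Admissible extensions exist by Corollary~\ref{cor:positive-admissible}, since $s>0$. Functoriality of the pairing under finite morphisms of the generic fibre is standard, and we will cite K\"unnemann \cite{Kunnemann_Height2001}, who treats $[n]$ on semistable abelian varieties.

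An alternative route avoids $[n]$. Write $V_s=\cF_L(u)$ with $u\in\mathrm{Ch}^{d+s}_s$, and realize it through $\hat T^{(s)}$. The pairing then becomes an adelic intersection of the form $\bar Q^{2d+s}\cdot\bar Q^{2d+1}$ against the cycles on a triple product, and the vanishing would follow from the degree-scaling of $\bar Q$ under $[n]\times\id$. We would use this only as a fallback.
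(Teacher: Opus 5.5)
Your proposal is correct and is essentially the paper's argument: the projection formula for $[n]$ forces $\langle x_s,y_t\rangle_{\mathrm{BB}}=0$ unless $s+t=2$, so only $V_1$ pairs nontrivially with $\cF_L(V_1)$. Your identity $\langle[n]^*x,[n]^*y\rangle_{\mathrm{BB}}=n^{2g}\langle x,y\rangle_{\mathrm{BB}}$ is just a repackaging of the paper's $\langle[n]^*x_s,y_t\rangle_{\mathrm{BB}}=\langle x_s,[n]_*y_t\rangle_{\mathrm{BB}}$. One small correction: negative Beauville degrees are not known to vanish, so the reduction should be to all odd $s\neq1$ rather than odd $s\geq3$; your scaling argument already covers every $s\neq1$, and the admissible extensions of the pieces with $s<0$ come from K\"unnemann's general existence theorem rather than from Corollary~\ref{cor:positive-admissible}.
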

\begin{proof}
	Indeed, functoriality of the height pairing under $[n]$ gives, for
	$x_s\in\mathrm{Ch}^q_s(A)$ and
	$y_t\in\mathrm{Ch}^{g+1-q}_t(A)$ homologically trivial,
	\[
	n^{2q-s}\langle x_s,y_t\rangle_{\mathrm{BB}}=\langle [n]^*x_s,y_t\rangle_{\mathrm{BB}}=\langle x_s,[n]_*y_t\rangle_{\mathrm{BB}}
	=n^{2q-2+t}\langle x_s,y_t\rangle_{\mathrm{BB}}.
	\]
	Consequently the pairing vanishes unless $s+t=2$; see also
	\cite[\textsection~5.3, page 68]{Zhang_GScycle}.
	The component of $\cF_L\mathrm{Ce}(V)$ of codimension $g+1-q$
	is $\cF_L(V_1)$, where $q=g-d$. Therefore, the only nonvanishing term among $\langle V_s,\cF_L(V_1)\rangle_{\mathrm{BB}}$ is the term with $s=1$. Therefore, we obtain
	\[
	h_{\mathrm{BB},\cF}(\mathrm{Ce}(V))
	=\frac{(-1)^d}{[K:\Q]}\langle V_1,\cF_L(V_1)\rangle_{\mathrm{BB}}
	=h_{\mathrm{BB},\cF}(V_1).
	\]\end{proof}

On $\mathrm{Ch}^q_1(A)$, we define
\[
\mathrm{Lef}^q=
\begin{cases}
\mathsf L^{g+1-2q},&2q\leq g+1,\\
\bigl(\mathsf L^{2q-g-1}:\mathrm{Ch}^{g+1-q}_1(A)
\xrightarrow{\sim}\mathrm{Ch}^q_1(A)\bigr)^{-1},&2q>g+1.
\end{cases}
\]
The inverse exists by \cite[Proposition~5.5]{Beauville_SL2}.
We use the following normalization of the Beilinson--Bloch height of Lefschetz type:
\[
h_{\mathrm{BB},\mathrm{Lef}}(z)
:=\frac{(-1)^q}{[K:\Q]}\langle z, \mathrm{Lef}^qz\rangle_{\mathrm{BB}},
\qquad z\in\mathrm{Ch}^q_1(A).
\]
In particular, if $2q\leq g+1$, for primitive $z\in P^q_1(A)$, we have 
$$h_{\mathrm{BB},\mathrm{Lef}}(z)=\frac{(-1)^q}{[K:\Q]}\langle z,\mathsf L^{g+1-2q}z\rangle_{\mathrm{BB}}.$$
Our definition has the same sign convention as in the arithmetic standard conjecture of Hodge type, and the height of Lefschetz type is thus conjecturally non-negative on primitive classes; see for example
\cite[\textsection~5.3]{Zhang_GScycle}.

\begin{prop}\label{prop:fourier-lefschetz}
For the decomposition\begin{equation*}
	z=\sum_{j=\max\{0,2q-g-1\}}^{q-1}z_j,
	\qquad z_j=\mathsf L^j u_j,\quad u_j\in P^{q-j}_1(A).
\end{equation*}as in \eqref{eq:primitive-decomposition}, we have
\begin{align}
h_{\mathrm{BB},\mathrm{Lef}}(z)
&=\sum_j h_{\mathrm{BB},\mathrm{Lef}}(z_j),\notag\\
h_{\mathrm{BB},\cF}(z)
&=h^0(L)\sum_j(-1)^j\frac{j!}{(g+1-2q+j)!}
h_{\mathrm{BB},\mathrm{Lef}}(z_j).\label{eq:fourier-lefschetz}
\end{align}
In particular, if $2q\leq g+1$ and $z\in P^q_1(A)$ is primitive, then
\[
h_{\mathrm{BB},\cF}(z)
=\frac{h^0(L)}{(g+1-2q)!}\,
h_{\mathrm{BB},\mathrm{Lef}}(z).
\]
\end{prop}
\begin{proof}
Put $m=g+1-2q$, and let $F_{\mathrm B}$ be Beauville's normalized Fourier transform.
His kernel class is $-c_1(Q)$ and his transform is divided by
$h^0(L)$ \cite[\textsection~1.6 and \textsection~4.1]{Beauville_SL2}. Therefore
\[
F_{\mathrm B}=\frac{1}{h^0(L)}[-1]^*\cF_L,
\qquad
\cF_L=-h^0(L)F_{\mathrm B}\quad\text{on Beauville degree }1.
\]
By \cite[Corollary~5.4]{Beauville_SL2}, for every $\max\{0,2q-g-1\}\leq j\leq q-1$,
\[
\cF_L(\mathsf L^j u_j)
=h^0(L)(-1)^{m+j+1}\frac{j!}{(m+j)!}
\mathsf L^{m+j}u_j,
\qquad
\mathrm{Lef}^q(\mathsf L^j u_j)=\mathsf L^{m+j}u_j.
\]
All exponents and factorials here are nonnegative by the range of $j$.
Since $z_j$ has codimension $q$, we find that
\[
\begin{aligned}
h_{\mathrm{BB},\cF}(z_j)
&=\frac{(-1)^{g-q}}{[K:\Q]}\langle z_j,\cF_L(z_j)\rangle_{\mathrm{BB}}\\
&=\frac{h^0(L)(-1)^{g-q+m+j+1}}{[K:\Q]}\frac{j!}{(m+j)!}
\langle z_j,\mathrm{Lef}^q z_j\rangle_{\mathrm{BB}}\\
&=h^0(L)(-1)^j\frac{j!}{(m+j)!}
h_{\mathrm{BB},\mathrm{Lef}}(z_j),
\end{aligned}
\]
where we used $(-1)^{g-q+m+j+1+q}=(-1)^j$.

Finally we check that the summands are orthogonal for the Lefschetz pairing. If $i>j$, the
self-adjointness of $\mathsf L$ for the Beilinson--Bloch height pairing
(see Lemma~\ref{lem:lefschetz-self-adjoint}) gives
\[
\langle\mathsf L^i u_i,\mathsf L^{m+j}u_j\rangle_{\mathrm{BB}}
=\langle u_i,\mathsf L^{m+i+j}u_j\rangle_{\mathrm{BB}}=0,
\]
since $m+i+j>m+2j$ and $\mathsf L^{m+2j+1}u_j=0$ by primitivity.
If $i<j$, similar arguments give
\[
\langle\mathsf L^i u_i,\mathsf L^{m+j}u_j\rangle_{\mathrm{BB}}
=\langle\mathsf L^{m+i+j}u_i,u_j\rangle_{\mathrm{BB}}=0,
\]
since $m+i+j>m+2i$ and $\mathsf L^{m+2i+1}u_i=0$.
Since $\cF_L$ is a scalar multiple of $\mathrm{Lef}^q$ on each summand $\mathsf L^jP^{q-j}_1(A)$, the orthogonality for the height of Lefschetz type gives the same orthogonality for the height of Fourier type.
\end{proof}

As an application, we prove the positivity of Beilinson--Bloch heights of Fourier type follows from the arithmetic standard conjecture of Hodge type.
\begin{cor}
	\label{cor:fourier-height-sign}
	The arithmetic standard conjecture of Hodge type implies Conjecture~\ref{Conj:FourierPos}.
\end{cor}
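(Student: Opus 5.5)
By Proposition~\ref{prop:BBFourierV=V1}, we have $h_{\mathrm{BB},\cF}(\mathrm{Ce}(V))=h_{\mathrm{BB},\cF}(V_1)$ with $V_1\in\mathrm{Ch}^q_1(A)$ and $q=g-d$. So it suffices to show $h_{\mathrm{BB},\cF}(z)\geq 0$ for every $z\in\mathrm{Ch}^q_1(A)$ (in the case $d=g$ both sides vanish). I first decompose $z=\sum_j z_j$ as in \eqref{eq:primitive-decomposition}, with $z_j=\mathsf L^ju_j$ and $u_j\in P^{q-j}_1(A)$. Formula \eqref{eq:fourier-lefschetz} of Proposition~\ref{prop:fourier-lefschetz} then gives
\[
h_{\mathrm{BB},\cF}(z)=h^0(L)\sum_j(-1)^j\frac{j!}{(g+1-2q+j)!}h_{\mathrm{BB},\mathrm{Lef}}(z_j).
\]
Since $h^0(L)>0$ and the factorials are positive, it remains to show $(-1)^jh_{\mathrm{BB},\mathrm{Lef}}(z_j)\geq0$ for each $j$.

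Set $m=g+1-2q$ and $p=q-j$. The codimension $p$ satisfies $2p=2q-2j\leq g+1$: when $2q\leq g+1$ this is clear, and otherwise $j\geq 2q-g-1$ gives it. The first step is to show that $\mathrm{Lef}^q z_j=\mathsf L^{m+j}u_j$, as in the proof of Proposition~\ref{prop:fourier-lefschetz}. Then I use Lemma~\ref{lem:lefschetz-self-adjoint} to move $\mathsf L^j$ across the pairing:
\[
\langle z_j,\mathrm{Lef}^qz_j\rangle_{\mathrm{BB}}=\langle \mathsf L^ju_j,\mathsf L^{m+j}u_j\rangle_{\mathrm{BB}}=\langle u_j,\mathsf L^{m+2j}u_j\rangle_{\mathrm{BB}}.
\]
The exponent is $m+2j=g+1-2p$, so this is exactly the Lefschetz pairing of the primitive class $u_j\in P^p_1(A)$. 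For the signs, $(-1)^q=(-1)^{p+j}$, hence
\[
h_{\mathrm{BB},\mathrm{Lef}}(z_j)=(-1)^j\,h_{\mathrm{BB},\mathrm{Lef}}(u_j),
\]
and so $(-1)^jh_{\mathrm{BB},\mathrm{Lef}}(z_j)=h_{\mathrm{BB},\mathrm{Lef}}(u_j)$.

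Finally, the arithmetic standard conjecture of Hodge type, in the form for primitive classes of Beauville degree one recalled before Proposition~\ref{prop:fourier-lefschetz}, gives $h_{\mathrm{BB},\mathrm{Lef}}(u_j)=\frac{(-1)^p}{[K:\Q]}\langle u_j,\mathsf L^{g+1-2p}u_j\rangle_{\mathrm{BB}}\geq0$ for each $u_j$ with $2p\leq g+1$. Summing over $j$ gives the claim.

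I expect the main difficulty to lie in two places. The first is tracking the signs, so that the $(-1)^j$ in \eqref{eq:fourier-lefschetz} exactly cancels the sign picked up when passing from $z_j$ to $u_j$. The second is making sure that the conjecture's primitivity notion (kernel of $\mathsf L^{g+2-2p}$) matches Beauville's $P^p_1(A)$; this is where I would cite \cite[Proposition~5.2]{Beauville_SL2}. The conjecture also needs every relevant class to have an admissible extension. For this I would invoke Corollary~\ref{cor:positive-admissible}, which covers all classes of Beauville degree one after a semistable base change, and that base change does not affect the normalized heights.
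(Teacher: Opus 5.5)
Your proposal is correct and follows essentially the same route as the paper: reduce to $V_1$ via Proposition~\ref{prop:BBFourierV=V1}, use Lemma~\ref{lem:lefschetz-self-adjoint} to obtain $h_{\mathrm{BB},\mathrm{Lef}}(z_j)=(-1)^j h_{\mathrm{BB},\mathrm{Lef}}(u_j)$, and substitute into \eqref{eq:fourier-lefschetz} to get a combination of the $h_{\mathrm{BB},\mathrm{Lef}}(u_j)$ with positive coefficients. Your additional remarks on the trivial case $d=g$, on matching the two notions of primitivity, and on the existence of admissible extensions via Corollary~\ref{cor:positive-admissible} are correct but go slightly beyond what the paper spells out.
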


\begin{proof}
For $1\leq r\leq(g+1)/2$, the arithmetic standard conjecture of Hodge type predicts that
\[
h_{\mathrm{BB},\mathrm{Lef}}(u)
=\frac{(-1)^r}{[K:\Q]}\langle u,\mathsf L^{g+1-2r}u\rangle_{\mathrm{BB}}\geq0
\qquad\text{for }u\in P^r_1(A);
\]
see \cite[\textsection~5.3]{Zhang_GScycle}.
For the decomposition~\eqref{eq:primitive-decomposition}, with
$z_j=\mathsf L^ju_j$ and $u_j\in P^{q-j}_1(A)$, self-adjointness gives
\[
h_{\mathrm{BB},\mathrm{Lef}}(z_j)
=\frac{(-1)^q}{[K:\Q]}\langle u_j,\mathsf L^{g+1-2q+2j}u_j\rangle_{\mathrm{BB}}
=(-1)^j h_{\mathrm{BB},\mathrm{Lef}}(u_j).
\]
Thus Proposition~\ref{prop:fourier-lefschetz} yields
\[
h_{\mathrm{BB},\cF}(z)
=h^0(L)\sum_j\frac{j!}{(g+1-2q+j)!}
h_{\mathrm{BB},\mathrm{Lef}}(u_j).
\]
All coefficients in this sum are positive, so the conjecture predicts that
$h_{\mathrm{BB},\cF}(z)\geq0$ for $z\in\mathrm{Ch}^q_1(A)$.

Taking $z=V_1$, applying Proposition~\ref{prop:BBFourierV=V1}, we obtain the conclusion of Conjecture~\ref{Conj:FourierPos}.
This explains the factor $(-1)^d$ in our normalization, where $d=g-q$ is the dimension of the cycle.
\end{proof}

\bibliographystyle{amsalpha}

\bibliography{references}
\end{document}